\documentclass[12pt, reqno]{amsart}
\usepackage[utf8]{inputenc} 
\usepackage[T1]{fontenc}
\usepackage[english]{babel}
\usepackage{mathtools}
\usepackage{amsmath}
\usepackage{amsfonts}
\usepackage[colorlinks, linkcolor=blue, filecolor=blue,
     citecolor = olive, urlcolor=purple]{hyperref}
\usepackage{amssymb}
\usepackage{amsthm}
\allowdisplaybreaks
\usepackage{mathrsfs}
\usepackage{scalerel}
\usepackage{orcidlink}

\usepackage{dsfont}
\usepackage{amsmath,amssymb,amsthm,amsfonts,enumerate,url,mathrsfs,tikz,graphicx,pifont,doi,comment}
\usepackage[normalem]{ulem}
\usepackage{epigraph}	
\usepackage{tikz,graphicx}
\usetikzlibrary{automata,positioning}
\usetikzlibrary{calc}


\usepackage{enumerate,url,float,lscape}
\usepackage{hyperref}
\usepackage{academicons}
\usepackage{xcolor}

\usepackage[normalem]{ulem}
\newcommand{\orcid}[1]{\href{https://orcid.org/#1}{\textcolor[HTML]{A6CE39}{\aiOrcid}}}

\allowdisplaybreaks



\usepackage{aliascnt}

\theoremstyle{plain}
\newtheorem{theorem}{Theorem}[section]

\newaliascnt{corollary}{theorem}
\newaliascnt{proposition}{theorem}
\newaliascnt{lemma}{theorem}

\newtheorem{lemma}[lemma]{Lemma}
\newtheorem{proposition}[proposition]{Proposition}
\newtheorem{corollary}[corollary]{Corollary}

\aliascntresetthe{corollary}
\aliascntresetthe{proposition}
\aliascntresetthe{lemma}

\theoremstyle{definition} 

\newaliascnt{definition}{theorem}
\newaliascnt{assum}{theorem}
\newaliascnt{assums}{theorem}
\newaliascnt{conjecture}{theorem}
\newaliascnt{conv}{theorem}

\newtheorem{definition}[definition]{Definition}

\newtheorem{conv}[conv]{Convention}

\aliascntresetthe{definition}
\aliascntresetthe{assum}
\aliascntresetthe{assums}
\aliascntresetthe{conjecture}
\aliascntresetthe{conv}

\newaliascnt{rems}{theorem}
\newaliascnt{rem}{theorem}
\newaliascnt{exa}{theorem}
\newaliascnt{exs}{theorem}

\newtheorem{rem}[rem]{Remark}

\aliascntresetthe{rems}
\aliascntresetthe{rem}
\aliascntresetthe{exa}
\aliascntresetthe{exs}

\numberwithin{equation}{section}
\numberwithin{lemma}{section}

 \def\mG{\mathsf{G}}

 \def\mV{\mathsf{V}}
 \def\mE{\mathsf{E}}

 \def\mP{\mathsf{P}}

 \def\mv{\mathsf{v}}
 \def\me{\mathsf{e}}
 \def\mf{\mathsf{f}}
 \def\mw{\mathsf{w}}

 \def\mf{\mathsf{f}}



\newcommand{\E}{\mathbb{E}}
\newcommand{\PP}{\mathbb{P}}

\newcommand{\N}{\mathbb{N}}
\newcommand{\heatcont}{{\mathcal Q}_t}
\newcommand{\mVD}{{\mathsf{V}_\mathrm{D}}}

\newcommand{\ud}{\,\mathrm{d}}
\newcommand{\e}{\mathrm{e}}




\newcommand{\Graph}{\mathcal{G}} 





\makeatletter
\@namedef{subjclassname@2020}{%
  \textup{2020} Mathematics Subject Classification}
\makeatother

\title[Faber--Krahn inequality for the heat content on quantum graphs]{Faber--Krahn inequality for the heat content on quantum graphs via random walk expansion}

\subjclass[2020]{Primary: 34B45. Secondary: 05C81, 49Q10}

\keywords{Heat content, Faber-Krahn inequality, Feynman-Kac formula, random walk expansion, shape optimisation}

\author[P.~Bifulco]{Patrizio Bifulco \orcidlink{https://orcid.org/0009-0004-0628-374X}} 

\address{Patrizio Bifulco, Lehrgebiet Analysis, Fakult\"at Mathematik und Informatik, Fern\-Universit\"at in Hagen, D-58084 Hagen, Germany}
\email{patrizio.bifulco@fernuni-hagen.de}

\author[M.~Täufer]{Matthias Täufer \orcidlink{https://orcid.org/0000-0001-8473-2310}}
\address{Matthias Täufer, Universit\'e Polytechnique Hauts-de-France, C\'ERAMATHS/DMATHS,
F-59313 -- Valenciennes Cedex 9 France}

\email{matthias.taufer@uphf.fr}




\thanks{
The first named author was supported by the Deutsche Forschungsgemeinschaft (Grant 397230547). 
Much of this work was done while the second named author was employed by FernUniversität in Hagen.
Both authors are indebted to Delio Mugnolo (Hagen) for fruitful and insightful discussions.
}

\textwidth=37.55cc
\oddsidemargin.2mm
\evensidemargin.2mm

\begin{document}

\begin{abstract}
We study the \emph{heat content} on quantum graphs and investigate whether an analogon of the \emph{Rayleigh--Faber--Krahn inequality} holds. 
This means that heat content at time $T$ among graphs of equal volume would be maximized by intervals (the graph analogon of balls as in the classic Rayleigh--Faber--Krahn inequality).
We prove that this holds at \emph{extremal times}, that is at small and at large times.
For this, we employ two complementary approaches: In the large time regime, we rely on a spectral-theoretic approach, using \emph{Mercer's theorem} whereas the small-time regime is dealt with by a random walk approach using the \emph{Feynman-Kac formula} and Brownian motions on metric graphs.
In particular, in proving the latter, we develop a new expression for the heat content as a positive linear combination of expected return times of (discrete) random walks -- a formulation which seems to yield additional insights compared to previously available methods such as the celebrated \emph{Roth formula} and which is crucial for our proof.

The question whether a Rayleigh--Faber--Krahn inequality for the heat content on metric graphs holds at \emph{all times} remains open.

%
\end{abstract}
\maketitle

\section{Introduction}
Metric graphs are networks of intervals, joined at their end points to form a graph-like metric space.
Self-adjoint differential operators on metric graphs are called \emph{quantum graphs} and have become an important model in both spectral theory and analysis of differential or evolution equations, cf., \cite{BerKuc13, Mug14, Mug19, Kur23} and references therein. In particular, attention has been devoted to \emph{spectral geometry}, that is the investigation of the interplay between the topological and metric properties of $\Graph$ on the one hand, and Laplacian eigenvalues on the other hand, cf., \cite{Kur23}. 

Analogously to domains and manifolds, the heat equation on metric graphs
\begin{equation}
	\label{eq:heat}
\frac{\partial}{\partial t} u = \Delta u,
\end{equation}
where $\Delta$ is a suitably defined self-adjoint Laplace operator,
models diffusion of heat.
Fundamental solutions of~\eqref{eq:heat} are encoded in the so-called \emph{heat kernel} $(0,\infty) \times \Graph \times \Graph \ni (t,x,y) \mapsto  p_t^\Graph(x,y)$ depending on the space and time, which can be used to infer subtle information on the underlying metric graph. 
While the dependence of the spectrum on the geometry of the graph has been studied a lot, in particular by means of variational methods, the behavior of these fundamental solutions, or the heat kernel, is less understood. Indeed, since the heat kernel not depends on the eigenvalues but in particular on the shape of the eigenfunctions of the graph Laplacian $\Delta_\Graph$, it seems to be rather hard to deduce such information by purely variational techniques.

Therefore, instead of studying the dependence of the heat kernel itself on the geometry, one has considered a relaxation, namely its spatial integral on $\Graph \times \Graph$, known as the \emph{heat content} $\heatcont(\Graph)$ of $\Graph$. 
From a physical point of view, the heat content measures how much mass of a constant initial configuration of heat remains in the graph at time $t$ under the action of the heat semigroup $(\e^{-t\Delta_\Graph})_{t \geq 0}$ and it has been studied a lot already on domains and manifolds \cite{BerDav89, Gil99, Sav16}. 
On metric graphs, we are not aware of investigations of the dependence of the heat content on the geometry, but the time-integrated heat content, known as \emph{torsional rigidity} has been studied in \cite{MugPlu23,BifMug23b, OezcanT-24}.
The central observation for the torsional rigidity is that, similarly to the variational characterization of eigenvalues, the heat content also has a variational characterization making it amenable to spectral-geometric methods.

However, a similar variational representation for the heat content does not exist whence it requires different methods.
One different perspective on the heat kernel and thus the heat content is provided by \emph{Roth's formula}, see \cite{Rot84, KosPotSch07}, which represents the heat kernel at time $t$ as a sum over directed paths.
Indeed, Roth's formula is used to study the heat content in \cite{BifMug24}.
However, while Roth's formula is useful for deriving short-term asymptotics of the heat content, the sign-indefiniteness of terms appearing in it makes it challenging to achieve what is usually achieved via variational methods in spectral geometry: namely to \emph{compare} the heat content of different metric graphs.

Therefore, in this article, we resort to a different approach coming from stochastic analysis: using the theory of Brownian motions on metric graphs \cite{KosPotSch12}, it is possible to derive a \emph{Feynman-Kac-formula} for the heat kernel. This leads to a different understanding of $\heatcont(\Graph)$ through random walks on metric graphs and in particular discrete random walks on \emph{combinatorial graphs}. 
A common question in spectral geometry is the question of \emph{shape-optimization} leading to optimal geometries for the eigenvalues, also known as \emph{Rayleigh--Faber--Krahn inequality} (or briefly \emph{Faber--Krahn inequality}). 
It was proved, independently and with different methods by Nicaise, Friedlander and Kurasov--Naboko \cite{Nic87, Fri03, KurNab14} that the lowest non-trivial eigenvalue is \emph{minimized} by path graphs, see also~\cite{DufKenMug22} for analogous statements on infinite metric graphs.
The torsional rigidity behaves complementary to it eigenvalue and is \emph{maximized} by path graphs; this was verified in \cite[Theorem~4.6]{MugPlu23} for metric graphs and in \cite[Lemma~5.2]{BifMug24} for combinatorial graphs. 

In this article, we study the same question for the heat content.
Due to a lack of variational techniques, we can only prove asymptotic statements in the large and small-time regime in both of which we prove that the heat content is maximized by path graphs. 
Let us mention that on domains with Dirichlet boundary, it was shown in \cite{BurSch01} that the heat content on domains is \emph{maximized} on balls at all times.
However, since metric graphs behave fundamentally different from domains, for instance among the class of all graphs of a given volume, there is no polynomial bound on $\epsilon$-balls, it is not clear how to transfer these methods to metric graphs.

The structure of this article is as follows:
Section \ref{sec:notions_and_main_results} provides notation and definitions of the main objects used in this article.
It also contains~\autoref{thm:main-thm}, the Faber-Krahn inequality of the heat content at small and large times.
Section~\ref{sec:Feynman-Kac} introduces the Feynman-Kac formula for the heat content, and defines random walks on metric graphs. 
In particular,~\autoref{thm:densities-random-walk} therein collects useful properties on Brownian motions on metric graphs which are going to be used in the subsequent Section~\ref{sec:proof_small_times}.
Therein, a new path sum formula for the heat content in terms of expectations of return times of discrete random walks is provided in~\autoref{thm:probabilistic-heat-content-formula}.
This is then used to prove the Faber-Krahn inequality for the heat content at small times -- the more difficult part of~\autoref{thm:main-thm}.
The Faber-Krahn inequality for the heat content at large times is proved using a different, more spectral theoretic approach in Section~\ref{sec:proof_large_times}.
\section{Notions and main results}
\label{sec:notions_and_main_results}
\subsection{Basic notions on metric graphs}
Let $\Graph$ be a compact and connected metric graph over a combinatorial graph $\mG = (\mV, \mE)$ consisting of a \emph{finite} vertex set $\mV = \mV(\mG)$ and a \emph{finite} edge set $\mE = \mE(\mG)$, that is the edges $\me \in \mE$ are understood as  intervals and topologically glued at their end points according to the structure of $\mG$.
We refer to \cite{BerKuc13, Mug14, Mug19, Kur23} and references therein for a more comprehensive introduction.
The length of an edge $\me \in \mE$ is denoted by $\ell_\me \in (0, \infty)$, and the \emph{total length} $\vert \Graph \vert$ of $\Graph$ by
\[
\vert \Graph \vert := 
\sum_{\me \in \mE}
\ell_\me \in (0,\infty).
\]
An edge $\me$ is \emph{incident} to a vertex $\mv$, written $\me \sim \mv$, if $\me$ is connect $\mv$ with another vertex in the combinatorial graph $\mG$, or, in other words, if $\mv$ corresponds to $0$ or $\ell_\me$ in the metric graph $\Graph$. Two distinct vertices $\mv,\mw \in \mV$ are called \emph{adjacent} if there exists an edge $\me \in \mE$ incident to both $\mv$ and $\mw$. Moreover, for a vertex $\mv \in \mE$ we denote by $\deg(\mv) := \# \{ \me \in \mE \colon \me \sim \mv \}$ its \emph{degree}. 
A \emph{path} is a sequence of vertices $(\mv_0,\mv_1,\dots,\mv_n)$ such that $\mv_i \sim \mv_{i+1}$ for every $i=0,1,\dots,n-1$.
We say that $\Graph$ is \emph{connected} if for every pair $\mv, \mw \in \mV$ there is a path joining $\mv$ and $\mw$.

For our purposes, it will be sometimes crucial to specialize to the class of metric graphs having \emph{rationally dependent} edge lengths, that is,
\[
\frac{\ell_\me}{\ell_\mf} \in \mathbb{Q} \qquad \text{for any two edges $\me,\mf \in \mE$.}
\]
Moreover, we call a metric graph $\Graph$ \emph{equilateral} if $\ell_\me = \ell$ for all $\me \in \mE$ for some $\ell > 0$. 
Clearly, graphs with rationally dependent edge lengths can be made equilateral by subdividing all edges with the introduction of additional \emph{dummy vertices}.

\subsection{Dirichlet Laplacian and the heat content}
From now on, let $\mV_{\mathrm{D}} \subset \mV$ be a non-empty subset which will be referred to as the set of \emph{Dirichlet vertices}. 
Moreover, on metric graphs, when we speak about connectedness in the presence of a set of Dirichlet vertices, we always mean that $\Graph \setminus \mVD$ is connected.
Indeed, otherwise, the Laplacian with Dirichlet conditions at $\mVD$ will be a direct sum of Laplacians on the connected components.

\begin{definition}
We denote by $\Delta_{\Graph}^{\mV_{\mathrm{D}}} = \Delta_\Graph$ the Laplacian on $L^2(\Graph)$ with Dirichlet conditions at all vertices in $\mV_{\mathrm{D}}$ and Kirchhoff and continuity conditions at all vertices in $\mV \setminus \mV_{\mathrm{D}}$, i.e., the self-adjoint operator on 
\[
L^2(\Graph) := \bigoplus_{\me \in \mE} L^2(0,\ell_\me)
\]
 associated with the quadratic form \(a=a_{\Graph}\) given by
	\[a_{\Graph}(u):=\int_{\Graph}|u'(x)|^2\ud x=\sum_{\me\in \mE}\int_0^{\ell_\me}|u_\me'(x_\me)|^2\ud x_\me\]
on the form domain
	\[H^1_0(\Graph;\mV_\mathrm{D}):=\left\{u=(u_\me)_{\me\in \mE}\in\bigoplus_{\me\in \mE} H^1(0,\ell_\me) \: : \: \begin{array}{l} u(\mv)=0\text{ for }\mv\in \mV_\mathrm{D},\\ u\text{ is continuous in every }\mv\in \mV \setminus \mV_\mathrm{D} \end{array}\right\}.\]
We also denote the \emph{path graph of total length $\mathcal{L} > 0$} by $\mathcal{P}_\mathcal{L}$ and the Laplacian on it with a Dirichlet condition on one side and a Neumann condition on the other side by $\Delta_{\mathcal{P}_\mathcal{L}}^{\{0 \}}$. 
\end{definition}

It is well-known that the Dirichlet Laplacian $\Delta_\Graph^\mVD$ generates an analytic $C_0$-semigroup -- known as \emph{heat semigroup} -- which will be denoted with $(\mathrm{e}^{t\Delta_\Graph})_{t \geq 0}$ in the sequel. 
This leads us to the main quantity of this paper:

\begin{definition}\label{def:heat-content}
For a compact metric graph $\Graph$ with $\mV_{\mathrm{D}} \subset \mV$ such that $\Graph \setminus \mV_{\mathrm{D}}$ is connected,
the \emph{heat content} of $\Graph$ at time $t>0$ is 
\[
	\heatcont(\Graph; \mV_{\mathrm{D}})
	:=
	\big\lVert \e^{t \Delta_\Graph^\mVD} \mathbf{1} \big\rVert_{L^1(\Graph)}.
\]
\end{definition}

\begin{rem}
	Note that $\e^{t \Delta_\Graph^\mVD} \mathbf{1}$ is simply the value at time $t$ of the heat equation with initial condition constant to one, that is the solution of
	\[
		\begin{cases}
		\frac{\partial}{\partial t} u &= \Delta_\Graph^\mVD u,\\
		u(0) &\equiv \mathbf{1},
		\end{cases}
	\]
	which means that $\heatcont(\Graph; \mV_{\mathrm{D}})$ measures the $L^1$-norm of heat remaining in $\Graph$ at time $t$ when an initially constant distribution of heat dissipates through the set of Dirichlet vertices.
\end{rem}
The heat equation is also closely related to the concept of \emph{torsional rigidity} $T(\Graph; \mVD)$, that is the $L^1$-norm of the solution of
	\[
		-\Delta_\Graph^\mVD u = \mathbf{1}.
	\]
	Note that in the case of torsional rigidity, is is known that a Faber--Krahn inequality holds on metric graphs.
	More precisely, among graphs of a given volume, $T(\Graph; \mVD)$ is maximized by path graphs with a Dirichlet condition on one side~\cite{MugPlu23} and a Kirchhoff-Neumann condition on the other side, see also~\cite{OezcanT-24} for an analogous result in the case of $\delta$-vertex conditions.
	
	Both the lowest eigenvalue (cf.\ \eqref{eq:eigenvalues}) as well as the torsional rigidity enjoy \emph{variational formulations} as minimizer of the \emph{Rayleigh quotient}
	\[
	\lambda_1(\Graph; \mVD)
	=
	\min_{0 \neq u \in H^1_0(\Graph; \mVD)}
	\frac
	{\lVert u' \rVert_{L^2(\Graph)}^2}
	{\lVert u \rVert_{L^2(\Graph)}^2}
	\]
	or maximizer of the \emph{P{\'o}lya quotient}
	\[
	T(\Graph; \mV_{\mathrm{D}})
	=
	\max_{0 \neq u \in H^1_0(\Graph; \mVD)}
	\frac
	{\lVert u \rVert_{L^1(\Graph)}^2}
	{\lVert u' \rVert_{L^2(\Graph)}^2},	
	\]
	and these are central tools for comparing these quantities on different metric graphs.
	For the heat content, a comparable variational formulation is not known.
	However, one has at least
	\[ 
	T(\Graph; \mV_{\mathrm{D}})
	=
	\int_0^\infty
	\heatcont (\Graph; \mV_{\mathrm{D}})
	\]
	raising the question whether a similar Faber--Krahn inequality already holds for $\heatcont$.

\begin{rem}
	\label{rem:one_Dirichlet_vertex} 
  As the heat semigroup $(\mathrm{e}^{t\Delta_\Graph^{\mV_\mathrm{D}}})_{t \geq 0}$ maps $L^2(\Graph)$ into the form domain $H^1_0(\Graph;\mV_\mathrm{D})$ due to analyticity, each $\mathrm{e}^{t\Delta_\Graph^{\mV_\mathrm{D}}}$ is a bounded operator from $L^2(\Graph)$ to $L^\infty(\Graph)$, and by duality, from $L^1(\Graph)$ to $L^\infty(\Graph)$. 
 Thus, each $\mathrm{e}^{t\Delta_\Graph^{\mV_\mathrm{D}}}$, $t > 0$, is an integral operator with correponding integral kernel $p_t^\Graph \in L^\infty(\Graph \times \Graph)$, cf.\ also \cite{KosMugNic22, BifMug23} for stronger regularity properties on $p_t^\Graph$, known as the \emph{heat kernel}, i.e., 
	 \[
	 \big(\e^{t\Delta_\Graph}f \big)(x) = \int_\Graph p_t^\Graph(x,y) f(y) \ud y \qquad \text{for all $f \in L^2(\Graph)$ and all $x \in \Graph$.}
	 \]
	 According to \autoref{def:heat-content}, the heat content $\heatcont(\Graph;\mV_\mathrm{D})$ at time $t>0$ can alternatively be represented as the $L^1$ norm of $p_t^\Graph$, cf.~\cite{BifMug24} for more details.\\
\end{rem}

\subsection{The extremal Faber--Krahn property and main results}
We can now introduce the central notions used in this paper.
\begin{definition}[Extremal Faber--Krahn property]
Let $\Graph$ be a compact metric graph, let $\emptyset \neq \mV_{\mathrm{D}} \subset \mV$ such that $\Graph \setminus \mV_{\mathrm{D}}$ is connected, and let $t>0$. We say that $t \mapsto \heatcont(\Graph;\mVD)$ satisfies
\begin{enumerate}[(i)]
 \item\label{item:fk-small} the \emph{Faber--Krahn inequality for small times} if there exists a $t_0 > 0$ such that
 \[
 \heatcont(\Graph; \mV_{\mathrm{D}}) \leq \heatcont(\mathcal{P}_{\rvert \Graph \rvert}; \{ 0 \}) \qquad \text{for all $0 < t \leq t_0$},
\]
and equality holds for all $0 < t \leq t_0$ if and only if $\Graph = \mathcal{P}_{\rvert \Graph \rvert}$;
 \item\label{item:fk-large}
 the \emph{Faber--Krahn inequality for large times} if there exists a $T_0 > 0$ such that
 \[
 \heatcont(\Graph; \mV_{\mathrm{D}}) \leq \heatcont(\mathcal{P}_{\rvert \Graph \rvert}; \{ 0 \}) \qquad \text{for all $t \geq T_0$},
 \]
 and equality holds for all $t \geq T_0$ if and only if $\Graph = \mathcal{P}_{\rvert \Graph \rvert}$.
\end{enumerate}
 If $t \mapsto \heatcont(\Graph;\mVD)$ satisfies both \eqref{item:fk-small} and \eqref{item:fk-large}, we say that $t \mapsto \heatcont(\Graph;\mVD)$ (or simply the heat content) satiesfies an \emph{extremal Faber--Krahn inequality}.
\end{definition}

Our first main result is:
\begin{theorem}\label{thm:main-thm}
Let $\Graph$ be a compact metric graph, and let $\emptyset \subsetneq \mV_{\mathrm{D}} \subset \mV$ such that
$\Graph \setminus \mV_{\mathrm{D}}$ is connected
Then:
\begin{enumerate}[(i)]
\item\label{item:fk-large-thm} $t \mapsto \heatcont(\Graph;\mVD)$ satisfies the Faber--Krahn inequality for large times.
\item\label{item:fk-small-thm} If additionally the edge lengths of $\Graph$ are rationally dependent, then $t \mapsto \heatcont(\Graph;\mVD)$ satisfies the Faber--Krahn inequality for small times. 
In particular, the heat content satisfies an extremal Faber--Krahn inequality.
\end{enumerate}
Equality holds at sufficiently small or large times if and only if $\Graph = \mathcal{P}_{\vert \Graph \vert}$ with $\mv_{\mathrm{D}}$ being either the vertex identified with $0$ or with $\lvert \Graph \rvert$.
\end{theorem}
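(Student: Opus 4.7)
The proof naturally splits along the two parts of the statement, requiring two complementary techniques.

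For the large-time part~(\ref{item:fk-large-thm}), the plan is to employ Mercer's theorem. Let $(\lambda_k)_{k\geq 1}$ be the non-decreasing sequence of eigenvalues of $\Delta_{\Graph}^{\mVD}$ with orthonormal eigenfunctions $(\phi_k)_{k\geq 1}$. Integrating the Mercer expansion of the heat kernel twice over $\Graph$ yields
\[
 \heatcont(\Graph;\mVD) = \sum_{k\geq 1} \e^{-\lambda_k t} \left( \int_{\Graph} \phi_k \right)^{\!2},
\]
where every summand is non-negative. For $t\to\infty$ this series is driven by the $k=1$ term. The classical Nicaise--Friedlander--Kurasov--Naboko theorem gives $\lambda_1(\Graph;\mVD) \geq \pi^2/(4\lvert\Graph\rvert^2)=\lambda_1(\mathcal{P}_{\lvert\Graph\rvert};\{0\})$, with equality iff $\Graph=\mathcal{P}_{\lvert\Graph\rvert}$ and $\mVD$ is one endpoint. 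Combining this strict spectral gap for non-path graphs with Cauchy--Schwarz ($\left(\int_{\Graph}\phi_1\right)^2 \leq \lvert\Graph\rvert$) and absorbing the higher-order Mercer tail into a harmless factor $\e^{-\lambda_2 t}$ at some reference time, one obtains strict inequality for all $t \geq T_0$. In the path case, the heat content of $\mathcal{P}_{\lvert\Graph\rvert}$ can be computed explicitly via separation of variables, which also pins down the equality statement.

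For the small-time part~(\ref{item:fk-small-thm}), the plan is to exploit the new representation of~\autoref{thm:probabilistic-heat-content-formula}, which writes $\heatcont(\Graph;\mVD)$ as a positive linear combination
\[
 \heatcont(\Graph;\mVD) = \sum_n c_n(t,\ell)\, R_n(\Graph;\mVD),
\]
where $\ell$ is the (common) edge length after subdividing $\Graph$ at sufficiently many dummy vertices---possible precisely because edge lengths are rationally dependent---and $R_n$ are expected return/survival quantities of a discrete random walk on the resulting combinatorial graph. The strategy is then a term-by-term comparison: since the discrete walks on a subdivided path graph are the ``longest-surviving'' ones at fixed total length (walks on any graph with vertex degrees $\geq 3$ are reflected more often and have more chances to enter $\mVD$), we expect $R_n(\Graph;\mVD) \leq R_n(\mathcal{P}_{\lvert\Graph\rvert};\{0\})$ for every $n$. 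Because all coefficients $c_n(t,\ell)$ are non-negative, summation preserves the inequality uniformly in $t > 0$, strictly so unless $\Graph$ is already a path.

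The main obstacle is the borderline case $\mVD=\{\mv\}$ with $\deg(\mv)=1$: here the leading $\sqrt{t}$-term in the short-time asymptotics of $\heatcont(\Graph;\mVD)$ (whose coefficient is governed only by $\sum_{\mv\in\mVD}\deg(\mv)$) already agrees with that of $\mathcal{P}_{\lvert\Graph\rvert}$, and strict inequality must be extracted from the next orders. Controlling this regime term-by-term is where the positivity in~\autoref{thm:probabilistic-heat-content-formula} is crucial: unlike the signed sum coming from Roth's formula, it allows a clean combinatorial comparison of random walks which eventually reach a branching point of degree $\geq 3$ versus those staying on a line. Finally, part~(\ref{item:fk-small-thm}) together with~(\ref{item:fk-large-thm}) yields the extremal Faber--Krahn property, and the equality cases at small/large times are read off, respectively, from rigidity in Friedlander's bound and from the strictness of the term-by-term random-walk comparison.
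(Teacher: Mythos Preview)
Your large-time argument~(\ref{item:fk-large-thm}) is essentially the paper's: Mercer expansion plus the strict eigenvalue inequality $\lambda_1(\Graph;\mVD)>\lambda_1(\mathcal{P}_{|\Graph|};\{0\})$ for $\Graph\neq\mathcal{P}_{|\Graph|}$, with the tail absorbed into a single exponentially decaying factor.

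Your small-time plan~(\ref{item:fk-small-thm}), however, contains a real gap. You propose a \emph{term-by-term} comparison $R_n(\Graph;\mVD)\le R_n(\mathcal{P}_{|\Graph|};\{0\})$ for every $n$, with non-negative coefficients $c_n(t,\ell)$ then preserving the inequality ``uniformly in $t>0$''. But this would prove the Faber--Krahn inequality for \emph{all} times, which is precisely what the paper leaves open (cf.\ \autoref{rem:conjectuere}). The heuristic that walks on graphs with branching ``have more chances to enter $\mVD$'' is misleading here: by \autoref{prop:expectation_random_walk} the expected return time $\E_{\mv_{\mathrm D}}[\tau_{\mv_{\mathrm D}}]=2\#\mE/\deg(\mv_{\mathrm D})$ depends only on $\#\mE$ and $\deg(\mv_{\mathrm D})$, hence is \emph{identical} for $\Graph$ and the subdivided path. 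So the leading term in \eqref{eq:probabilistic_expression_heat_content} cancels exactly, and there is no reason to expect the remaining coefficients $\E_{\mv_{\mathrm D}}[(\tau_{\mv_{\mathrm D}}-n)\mathbf{1}_{\{\tau_{\mv_{\mathrm D}}\ge n+1\}}]$ to be monotone in the graph for every $n$.

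The paper's actual mechanism is different and more delicate. After the exact cancellation of the $\alpha_0(t)\E[\tau]$ term, one identifies the minimal $k_0$ (twice the combinatorial distance from $\mv_{\mathrm D}$ to the nearest vertex of degree $\ge 3$) such that $\PP[\tau^{\mP}=k]=\PP[\tau^{\mG}=k]$ for $k<k_0$ but $\PP[\tau^{\mP}=k_0]>\PP[\tau^{\mG}=k_0]$. This makes the $n=k_0+1$ summand in the difference strictly positive; the infinitely many later summands are \emph{not} claimed to have a sign but are instead controlled by the hierarchy $\lim_{t\to 0^+}(\ell-\alpha_{k}(t))/(\alpha_k(t)-\alpha_{k-1}(t))=0$ of \autoref{lem:alpha_n_2}, which forces the first nonzero correction to dominate the entire tail for $t$ small. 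Your outline is missing both ingredients: the exact equality of expected return times (which is what makes the cancellation work beyond the $\sqrt{t}$ level) and the asymptotic hierarchy that replaces the unavailable sign information on the higher terms.
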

\begin{rem}
We emphasize that the statement of \autoref{thm:main-thm} is somewhat different to the classical Faber--Krahn inequality for the heat content on domains which holds for \emph{all} $t>0$ and was established in \cite{BurSch01}: indeed, there the heat content induced by the Laplace operator on a domain with fixed volume and with pure Dirichlet boundary conditions is compared to the one induced by the ball with Dirichlet boundary conditions and of same volume. Translating this to the language of graphs, this corresponds to the comparison of a graph, where
\[
\emptyset \neq \mVD = \{ \mv \in \mV \: : \: \deg(\mv) = 1 \}
\]
with an interval of same total length and with \emph{two} Dirichlet vertices in terms of their heat content. Therefore, it is not clear whether the methods used in \cite{BurSch01} can be adapted to metric graphs. 
\end{rem}

\begin{rem}
	Note that~\autoref{thm:main-thm} does \emph{not} state the existence of some $t_0$ above or below which the heat content of any graph of a given total length will be dominated by the heat content of the corresponding path graph.
	Indeed, it does not even exclude the existence of a sequence of graphs $(\Graph_n)_{n \in \N}$ with times $t_n \searrow 0$ such that
	\[
	\mathcal{Q}_{t_n}(\Graph_n; \mV_{\mathrm{D}}) 
	> \mathcal{Q}_{t_n}(\mathcal{P}_{\rvert \Graph \rvert}; \{ 0 \})
	\quad
	\text{for all $n \in \N$}.
	\]
	While we hold this to be unlikely, see also~\autoref{rem:conjectuere} below, a closer inspection of the proof of \autoref{thm:main-thm}~\eqref{item:fk-small-thm} suggests that the most problematic candidates in the small time regime might be pitchfork shaped graphs, consisting of a long segment with the Dirichlet point on one side and two short stubs attached on the other side.
\end{rem}
\begin{rem}[A very cautious conjecture]
\label{rem:conjectuere}
We believe that the Faber--Krahn property on metric graphs should hold at all times but are still reluctant to call this a conjecture.
Indeed, both the Faber--Krahn inequality for small times and large times use very different methods both of which rely on asymptotics of certain expressions.

In particular, the proof of the Faber--Krahn inequality for small times uses a somewhat surprising and delicate argument using that certain infinite sums can be identified with expectations of discrete random walks which are themselves independent of the topology of the underlying graph.
However, this argument only works asymptotically for sufficiently small times.

Also numerical simulations to gain insights as to whether the Faber--Krahn inequality holds at all times are a touchy issue for since, near $t = 0$, the heat content is in leading orders given by the volume of $\Graph$ and the number of Dirichlet vertices whereas the topology only enters as an exponentially small correction, cf.~\cite{BifMug24}.

In any case further research will be required to settle whether the Faber--Krahn-inequality holds for all times.
\end{rem}

Identifying all vertices in $\mV_{\mathrm{D}}$ with one \emph{single} vertex $\mv_{\mathrm{D}}$ will not affect the spectrum or eigenfunctions -- nor the heat content, defined above -- of $\Delta_{\Graph}$. Thus, from now on we stipulate:

\begin{conv}\label{conv:dirichlet-singleton}
The set of Dirichlet vertices $\mVD$ is a singleton, i.e., $\mV_{\mathrm{D}} = \{ \mv_{\mathrm{D}} \}$ for one single vertex $\mv_{\mathrm{D}} \in \mV$ and we abbreviate $\heatcont(\Graph;\mV_\mathrm{D}) := \heatcont(\Graph;\mv_\mathrm{D})$ for $t>0$. 
\end{conv}

\section{Feynman--Kac formula and Brownian motion on metric graphs}
\label{sec:Feynman-Kac}
We now lay the groundwork for the proof of \autoref{thm:main-thm}~\eqref{item:fk-small-thm}, the Faber--Krahn inequality at small times. The proof uses a novel expression for the heat content given in \autoref{thm:probabilistic-heat-content-formula}. It arises from the famous Feynman--Kac formula. For this purpose, we need notation on random walks, more precisely, Brownian motions on metric graphs, cf.\ also \cite{KosPotSch12}.

\begin{definition}[Brownian motions on metric graphs]
	The \emph{Brownian motion on $\Graph$, starting at $x \in \Graph$} is the stochastic process on $\Graph$ with continuous paths $(B_t)_{t \geq 0}$ such that
	\begin{itemize}
		\item[(i)]
		$B_0 = x$.
		\item[(ii)]
		While remaining within an edge, $B_t$ is an (ordinary one-dimensional) Brownian motion on an interval.
		\item[(iii)]
		Whenever $B_t \in \mV$ (i.e. the Brownian motion hits a vertex), $B_t$ independently chooses one of the neighbouring edges and continues as a one-dimensional Brownian motion on it until it hits $\mV$ for the next time. 
	\end{itemize}
	
	We denote the probability with respect to the Brownian motion, starting at $x \in \Graph$ by $\PP_x$.
	Furthermore, for a vertex $\mv \in \mV$, the first hitting time of $\mv$ is
	\[
	\tau_{\mv}^{\mathrm{BM}}
	:=
	\inf \{ t \geq 0 \colon B_t = \mv \}.
	\] 
\end{definition}

We shall use:

\begin{proposition}[Feynman-Kac formula]
	Let $\Graph$ be a compact graph and let $\{ \mv_\mathrm{D} \} = \mV_\mathrm{D} \subset \mV$ such that $\Graph \setminus \mV_{\mathrm{D}}$ is connected
	Then:
	\begin{equation}
		\label{eq:Feynman-Kac}
		\heatcont(\Graph; \mv_{\mathrm{D}})
		=
		\int_\Graph
		\PP_x \left[ \tau_{\mv_{\mathrm{D}}}^{\mathrm{BM}} \geq t \right]
		\mathrm{d} x
		\quad
		\text{for all $t > 0$}.
	\end{equation}
\end{proposition}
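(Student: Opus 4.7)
The plan is to prove the Feynman--Kac formula by first establishing the corresponding identity at the level of the heat semigroup and then integrating, as in the classical proof on domains or manifolds. The central identity to derive first is the pointwise representation
\[
\big(\e^{t \Delta_\Graph^{\mv_\mathrm{D}}} f\big)(x)
= \E_x\!\left[ f(B_t) \, \mathbf{1}_{\{\tau_{\mv_\mathrm{D}}^{\mathrm{BM}} > t\}} \right]
\quad\text{for all } f \in L^2(\Graph),
\]
that is, the Dirichlet heat semigroup coincides with the transition semigroup of the Brownian motion on $\Graph$ killed upon first hitting $\mv_\mathrm{D}$. Once this is available, the statement~\eqref{eq:Feynman-Kac} is immediate: specializing to $f \equiv \mathbf{1}$ gives $(\e^{t\Delta_\Graph^{\mv_\mathrm{D}}}\mathbf{1})(x) = \PP_x[\tau_{\mv_\mathrm{D}}^{\mathrm{BM}} > t]$, which is a nonnegative function (by positivity of the heat semigroup and/or directly from the right-hand side), so taking $L^1$-norm and using that $\PP_x[\tau_{\mv_\mathrm{D}}^{\mathrm{BM}} = t] = 0$ for every fixed $t > 0$ (since the hitting time is a continuous random variable) yields the claimed integral over $\Graph$ of $\PP_x[\tau_{\mv_\mathrm{D}}^{\mathrm{BM}} \geq t]$.

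To establish the pointwise identity, I would argue that the right-hand side defines a Markovian semigroup $T_t f(x) := \E_x[f(B_t) \mathbf{1}_{\{\tau_{\mv_\mathrm{D}}^{\mathrm{BM}} > t\}}]$ on $L^2(\Graph)$, identify its generator, and then invoke uniqueness of strongly continuous semigroups. The generator of the unkilled Brownian motion on a metric graph with Kirchhoff--continuity conditions at vertices is the Kirchhoff Laplacian on $\Graph$; this is precisely the construction of \cite{KosPotSch12}. Killing the process at $\mv_\mathrm{D}$ then corresponds, via the Dynkin/Feynman--Kac machinery, to imposing a Dirichlet condition at $\mv_\mathrm{D}$, i.e.\ to the operator $\Delta_\Graph^{\mv_\mathrm{D}}$. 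Alternatively, one can proceed by first checking the identity for $f \in C_c(\Graph \setminus \{\mv_\mathrm{D}\})$ using Dynkin's formula/the martingale problem on each edge and at each non-Dirichlet vertex, and then extending by density.

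The main obstacle is the careful bookkeeping at the vertices: on each edge the process is a standard one-dimensional Brownian motion, and the identification of the generator at interior points of edges is classical, but at a Kirchhoff vertex one must verify that the process actually corresponds to the Laplacian with Kirchhoff--continuity conditions (equivalently: that the local times at each incident half-edge balance correctly so that the domain condition $\sum_{\me \sim \mv} \partial_\nu u_\me(\mv) = 0$ is encoded by the vertex behavior in (iii) of the definition). Because this identification is already carried out in \cite{KosPotSch12}, I would cite it and focus the proof on deducing the killed version from the unkilled one via the strong Markov property applied at $\tau_{\mv_\mathrm{D}}^{\mathrm{BM}}$.

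Finally, once the semigroup identity is in place, Fubini (justified by the nonnegativity of the integrand) gives
\[
\heatcont(\Graph;\mv_\mathrm{D})
= \int_\Graph (\e^{t\Delta_\Graph^{\mv_\mathrm{D}}}\mathbf{1})(x)\,\mathrm{d} x
= \int_\Graph \PP_x\!\left[\tau_{\mv_\mathrm{D}}^{\mathrm{BM}} > t\right]\mathrm{d} x
= \int_\Graph \PP_x\!\left[\tau_{\mv_\mathrm{D}}^{\mathrm{BM}} \geq t\right]\mathrm{d} x,
\]
where in the last step the equality holds because $\PP_x[\tau_{\mv_\mathrm{D}}^{\mathrm{BM}}=t]=0$ for fixed $t>0$.
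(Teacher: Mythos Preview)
Your proposal is correct and follows essentially the same route as the paper: first obtain the pointwise identity $(\e^{t\Delta_\Graph^{\mv_\mathrm{D}}}\mathbf{1})(x)=\PP_x[\tau_{\mv_\mathrm{D}}^{\mathrm{BM}}\geq t]$ for the killed semigroup, then integrate over $\Graph$. The only difference is economy: the paper does not rederive the killed-semigroup representation from \cite{KosPotSch12} via generator identification and the strong Markov property as you sketch, but simply invokes \cite[Proposition~8.2]{BecGreMug21}, where the pointwise Feynman--Kac formula on metric graphs is already stated, and then integrates.
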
 
\begin{proof}
According to \cite[Proposition~8.2]{BecGreMug21}
\begin{align*}
\big(\e^{t\Delta_\Graph} \mathbf{1} \big)(x) = \mathbb{E}_x\Big[\mathbf{1}_{\{\tau_{\mv_\mathrm{D}}^{\mathrm{BM}} \geq t \}}(B_t) \Big] \quad \text{for all $x \in \Graph$ and all $t>0$,}
\end{align*}
whence
\begin{align*}
\heatcont(\Graph;\mv_\mathrm{D}) &= \int_\Graph \mathbb{E}_x\Big[\mathbf{1}_{\{\tau_{\mv_\mathrm{D}}^{\mathrm{BM}} \geq t\}}(B_t) \Big] \ud x = \int_\Graph \PP_x\left[\tau_{\mv_\mathrm{D}}^{\mathrm{BM}} \geq t \right] \ud x \quad \text{for all $t>0$.} \qedhere
\end{align*}
\end{proof}

Let us introduce some more notation in order to better understand the Brownian motion on $\Graph$. 
To this end, set $\mathbb{N} := \{1,2,\dots\}$, and $\mathbb{N}_0 := \mathbb{N} \cup \{0\}$.

\begin{definition}[Stopping times and process of new vertices to be hit]
	Let $\Graph$ be a compact and connected metric graph and $x \in \Graph$.
	For the Brownian motion $(B_t)_{t \geq 0}$, starting at $x$, let
	\[
	\eta_0
	:=
	\inf 
	\{ t \geq 0 \colon B_t \in \mV \} \geq 0,
	\quad
	\text{and}
	\quad
	\eta_{n+1} 
	:=
	\inf \{ t > \eta_n \colon B_t \in \mV \setminus \{ \mv_n \} \}
	\quad
	\text{for $n \in \mathbb{N}_0$},
	\]
	where, we define
	\[
	\mv_n := B_{\eta_n}, 
	\quad  \text{for $n \in \mathbb{N}_0$}.
	\]
\end{definition}

Having fixed $x$ as the starting point, $\eta_0$ denotes the first hitting time of $(B_t)_{t \geq 0}$ in the vertex set $\mV$, and $\mv_0$ is the first vertex to be hit.
After $\eta_n$, the path will enter any of the adjacent edges of $\mv_n$ with equal probability and turn into a one-dimensional Brownian motion on this edge until it either hits $\mv_n$ again or reaches the other end of the edge which is incident to $\mv_n$.
Indeed, due to known properties of the Brownian motion, the path will hit $\mv_0$ infinitely often in a suitable neighbourhood of $\eta_n$, almost surely, cf., for instance, \cite{BonDov23}.
If it hits $\mv_n$ again, it will again continue as a Brownian motion on one of the adjacent edges with equal probability.
The choice of the next edge, and the motion between two intersections with the vertex set are mutually independent.
 
For the rest of this section, we focus on the case where $\Graph$ is \emph{equilateral}, that is, all edges of $\Graph$ have the same length $\ell > 0$. 
Ignoring which edge it chooses and tracking only the distance to $\mv_0$, the resulting process will be the absolute value of a one-dimensional Brownian motion, started at $0$ until it reaches the other side of one edge, that is, until the absolute value of a Brownian motion exceeds $\ell$, and hits $\mv_{n+1}$.
By symmetry (recall that we assume all edges to be equilateral) and independence of the choice of the next edge at every return to $\mv$, we have 
\[
	\PP_x
	\left[
	B_{\eta_{n+1}} = \mw 
	\mid
	B_{\eta_{n}} = \mv
	\right]
	=
	\begin{cases}
		\frac{1}{\deg(\mv)},
		&\ \text{if $\mv \sim \mw$},\\
		0, 
		&\ \text{else}. 
	\end{cases}
\]
In other words, the process $\{\mv_0, \mv_1, \mv_2, \dots \}$ of new vertices to be hit is a \emph{symmetric, discrete random walk} on the combinatorial graph $\mG$.
Let us formally introduce it:
\begin{definition}[Symmetric random walk]\label{def:symmetric-random-walk}
	Let $\mG = (\mV, \mE)$ be a combinatorial graph.
	The \emph{symmetric, discrete random walk}, starting at $\mv_0 \in \mG$, is the process $(\mv_0, \mv_1,\mv_2, \dots) = (\mv_n)_{n \in \mathbb{N}}$ such that for $n \in \mathbb{N}_0$,
	\[
	\PP
	\left[
	\mv_{n+1} = \mw
	\right]
	=
	\begin{cases}
	\frac{1}{\deg( \mv_n )},
	&
	\quad
	\text{if $\mv_n \sim \mw$},\\
	0,
	&
	\quad
	\text{else}.
	\end{cases}
	\]
	Given a starting vertex $\mv_0 \in \mV$ and a target vertex $\mw \in \mV$, we denote by
	\[
	\tau_\mw^\mG
	:= \tau_\mw:=
	\inf 
	\{
	n \in \mathbb{N}
	\colon
	\mv_n = \mw \}
	\]
	the \emph{first hitting time} of the process $(\mv_n)_{n \in \mathbb{N}_0}$ in $\mw$.
	If we want to emphasize that the process starts at $\mv_0$, we will write
	$\PP_{\mv_0}$ and $\E_{\mv_0}$ for the probability and the expectation with respect to this process.
\end{definition}

\begin{rem}[A warning about notation]
	Recall that the symbol $\tau_{\mv}^{\mathrm{BM}} \in (0, \infty)$ is the first hitting time with respect to the Brownian motion (on the metric graph $\Graph$) whereas $\tau_{\mv} \in \mathbb{N}$ denotes the first hitting time with respect to the symmetric random walk on $\mG$.
	
	Furthermore, we use the notation $\PP_x$ and $\E_x$ for probability and expectation with respect to the Brownian motion started at $x$, and the notation $\PP_{\mv}$ and $\E_{\mv}$ for probability and expectation with respect to the symmetric random walk, respectively.
	However, it will be clear from the context which one is meant. 
\end{rem}

Using these observations, we can state and prove:

\begin{theorem}\label{thm:densities-random-walk}
	Let $\Graph$ be a compact and connected metric graph and assume that $\Graph$ is \emph{equilateral}, i.e., there is $\ell > 0$ such that $\ell_e = \ell$ for all $\me \in \mE$.
	Let $x \in \Graph$ correspond to a position $x \in [0,\ell]$ on an edge $\me \simeq [0,\ell]$ connecting vertices $\mv, \mw \in \mV$, identified with $0$ and $\ell$, respectively. 
	\begin{enumerate}[(i)]
	\item
	For the Brownian motion starting at $x \in \Graph$, the process of hitting times of new vertices
	\[
	( \eta_0, \eta_1, \eta_2, \dots ) =: (\eta_n)_{n \in \mathbb{N}}
	\]
	has independent increments $\eta_{n+1} - \eta_n$ where only the distribution of $\tau_0$ depends on $x$, namely 
	\begin{align}\label{eq:first-hitting-time-density}
	\PP_x
	\left[
		\eta_0 \geq t
	\right]
	=
	\sum_{n=0}^\infty \frac{4}{(2n+1)\pi} \sin\bigg( \frac{(2n+1)\pi x}{\ell} \bigg)\e^{-t\frac{(2n+1)^2\pi^2}{2\ell^2}},
	\end{align}
	for all $x \in [0,\ell]$ and $t>0$, whereas
	\begin{align}\label{eq:second-plus-hitting-time-density}
	\PP
	\left[ 
		\eta_{n+1} - \eta_n \geq t 
	\right]
	=
	\frac{2}{\sqrt{\pi}} \int_{-\infty}^{\frac{\ell}{\sqrt{2t}}} \e^{-x^2} \ud x,
	\end{align}
	in particular
	\begin{align}\label{eq:limit-eta-n-in-difference}
	\PP[\eta_{n+1} - \eta_n < t] = \mathrm{erfc}\bigg(\frac{\ell}{\sqrt{2t}} \bigg) \quad \text{for any $n \in \mathbb{N}_0$ and all $t>0$,}
	\end{align}
	where $\mathrm{erfc}(a) := \frac{2}{\sqrt{\pi}} \int_a^\infty \e^{-x^2} \ud x$ denotes the complementary error function.
	\item
	The first vertex $\mv_0$ to be hit satisfies
	\[
	\PP_x \left[ \mv_0 = \mv \right]
	=
	1 - 	\PP_x \left[ \mv_0 = \mw \right],
	\]
	and in particular, by symmetry,
	\[
	\int_0^\ell
	\PP_x \left[ \mv_0 = \mv \right]
	\mathrm{d}x
	=
	\int_0^\ell
	\PP_x \left[ \mv_0 = \mw \right]
	\mathrm{d}x
	=
	\frac{\ell}{2}, 
	\]
	as well as
	\[
	\int_\Graph
	\PP_x \left[ \mv_0 = \mv \right]
	\mathrm{d} x
	=
	\frac{\ell \deg(\mv)}{2}
	\quad
	\text{for every $\mv \in \mV$}.
	\]
	\item
	The process $(\mv_0, \mv_1, \dots) := (\mv_n)_{n \in \mathbb{N}}$ is a symmetric random walk on $\mG$, starting at $\mv_0$.
	\end{enumerate}
\end{theorem}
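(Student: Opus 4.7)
The plan is to split the argument into its three parts, combining the strong Markov property of Brownian motion on metric graphs (cf.~\cite{KosPotSch12}) with classical one-dimensional computations, and using crucially that $\Graph$ is equilateral.

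For part~(i), applying the strong Markov property at each $\eta_n$ immediately yields independence of the increments $\eta_{n+1}-\eta_n$; equilaterality further implies that the local picture around every $\mv_n$ is the same up to a relabelling of edges, so the distribution of $\eta_{n+1}-\eta_n$ is identical for all $n\geq 0$ and depends only on $\ell$, while only $\eta_0$ remembers the starting point $x\in[0,\ell]$ inside the initial edge. I would obtain~\eqref{eq:first-hitting-time-density} by identifying $\PP_x[\eta_0\geq t]$ with the survival probability of a one-dimensional Brownian motion on $[0,\ell]$ absorbed at both endpoints, and expanding it in the Dirichlet eigenbasis on $[0,\ell]$; integrating in $y$ picks out the Fourier sine coefficients $4/((2n+1)\pi)$ of the constant function $\mathbf{1}$. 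For~\eqref{eq:second-plus-hitting-time-density}--\eqref{eq:limit-eta-n-in-difference}, I would argue that, conditional on being at a vertex $\mv_n$, the distance-to-$\mv_n$ process has the law of $|B_t|$ for a standard Brownian motion $B$ on $\R$ started at $0$ (by symmetry among the $\deg(\mv_n)$ incident edges), so that $\eta_{n+1}-\eta_n$ equals the first time $|B_t|$ reaches $\ell$; the reflection principle then yields the complementary error function.

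For part~(ii), the gambler's-ruin formula for Brownian motion on the interval $[0,\ell]$ gives $\PP_x[\mv_0=\mv]=1-x/\ell$ and $\PP_x[\mv_0=\mw]=x/\ell$, hence $\int_0^\ell \PP_x[\mv_0=\mv]\ud x=\ell/2$; summing this contribution over the $\deg(\mv)$ edges incident to $\mv$ yields $\ell\deg(\mv)/2$. Part~(iii) is then a direct consequence: the strong Markov property makes $(\mv_n)_{n\geq 0}$ a Markov chain on $\mV$, and invariance of the star around any vertex $\mv$ under arbitrary permutations of its incident edges (all of length $\ell$) forces each neighbour of $\mv$ to be hit first with the same probability $1/\deg(\mv)$, matching~\autoref{def:symmetric-random-walk}.

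The main technical subtlety I anticipate lies in the reduction to the one-dimensional $|B_t|$ process for~\eqref{eq:second-plus-hitting-time-density}: near a vertex, the Brownian path returns to that vertex infinitely often in any right-neighbourhood of $\eta_n$, so one cannot simply condition on the single edge first entered. One has to invoke the construction of Brownian motion on a metric star from~\cite{KosPotSch12} together with the strong Markov property at the successive returns to $\mv_n$ in order to justify replacing the process around $\mv_n$ by a single reflected Brownian motion on $[0,\ell]$; once this is granted, all remaining identities are classical.
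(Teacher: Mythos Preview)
Your proposal is correct and follows essentially the same route as the paper: the Dirichlet heat/eigenfunction expansion for $\PP_x[\eta_0\geq t]$, the reflection principle for the increments $\eta_{n+1}-\eta_n$, a symmetry argument for part~(ii), and the construction of the walk for part~(iii). The only minor difference is that you make the gambler's-ruin formula $\PP_x[\mv_0=\mv]=1-x/\ell$ explicit in~(ii), whereas the paper argues purely by symmetry without computing it; and you are more careful than the paper in flagging the subtlety of the $|B_t|$ reduction near a vertex, which the paper glosses over in a single clause.
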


\begin{proof}
	(i) It is known (cf.~\cite{KarShr98}) that $u(t,x) := \PP_x[\eta_ 0 \geq t]$, $x \in [0,\ell]$, $t>0$ solves the initial value problem
	\begin{equation}\label{eq:initial-value-problem}
\begin{cases} \frac{\partial}{\partial t} u(t,x) &= \frac{1}{2} \frac{\partial^2}{\partial x^2} u(t,x), \\
	u(t,0) &= u(t,\ell) = 0, \\
	\lim_{t \rightarrow 0^+} u(t,x) &= 1, 
	\end{cases} 
\qquad x \in [0,\ell], \:\: t> 0.
	\end{equation}
	Analogously to \cite[Proposition~6.3]{BorHarJon22} or by computing the Fourier series, one concludes
	\[
	\PP_x[\eta_ 0 \geq t] = \sum_{k=1}^\infty c_k \sin\bigg(\frac{k\pi x}{\ell} \bigg)\e^{-t\frac{k^2\pi^2}{2\ell^2}}, \quad x \in \Graph, \: t > 0,
	\]
	with
	\begin{align}\label{eq:fourier-coefficients}
	c_k \e^{-t\frac{k^2\pi^2}{2\ell^2}} = \frac{2}{\ell} \int_0^\ell u(t,x) \sin\bigg( \frac{k\pi x}{\ell} \bigg) \ud x, \quad k \in \mathbb{N}.
	\end{align}
	Taking $t \rightarrow 0^+$ on both sides of \eqref{eq:fourier-coefficients}, it follows that for any $k \in \mathbb{N}$
	\begin{align*}
	c_k = \frac{2}{\ell} \int_0^\ell \sin\bigg(\frac{k\pi x}{\ell} \bigg) \ud x = \frac{2}{k \pi} \left[1-(-1)^k \right] = \begin{cases} \frac{4}{k\pi}, & \text{if $k$ is odd,} \\ 0, & \text{if $k$ is even,} \end{cases}
	\end{align*}
	yielding~\eqref{eq:first-hitting-time-density} for $\mathbb{P}_x[\eta_0 \geq t]$ for $x \in \Graph$ and $t>0$.
	
	The second identity follows from the \emph{reflection principle}, cf., for instance, \cite{KarShr98}. 
	Indeed, as every edge $\me \in \mE$ has length $\ell > 0$, and $(B_t)_{t \geq 0}$ behaves like a one-dimensional Brownian motion starting at $0$ on the interval $[0,\ell]$ after reaching $\eta_n$ and deciding for any of the adjacent edges to $\mv_n$, the reflection principle yields -- as $B_t \sim \mathcal{N}(0,t)$ for any $t>0$ -- that
	\begin{align*}
	\PP[\eta_{n+1} - \eta_n \geq t] &= 1 - \PP[\eta_{n+1} - \eta_n < t] = 1- 2\PP[B_t \geq \ell] \\&= 1- \sqrt{\frac{2}{\pi t}} \int_\ell^\infty \e^{-\frac{x^2}{2t}} \ud x = 1- \frac{2}{\sqrt{\pi}}\int_{\frac{\ell}{\sqrt{2t}}}^\infty \e^{-x^2}\ud x = \frac{2}{\sqrt{\pi}} \int_{-\infty}^{\frac{\ell}{\sqrt{2t}}} \e^{-x^2} \ud x,
	\end{align*}
	which is the identity claimed in \eqref{eq:second-plus-hitting-time-density}.

	(ii) The first identity is clear. Moreover, by symmetry this implies
	\begin{align*}
	\int_0^\ell \PP_x[\mv_0 = \mv] \ud x = \ell - \int_0^\ell \PP_x[\mv_0 =\mw] \ud x = \ell-\int_0^\ell \PP_x[\mv_0 = \mv] \ud x,
	\end{align*}
	yielding the second identity. 
	The third identity  follows from the fact that $\Graph$ is equilateral.
	
	(iii) This is due to the construction of the random walk $(\mv_n)_{n \in \mathbb{N}} = (\mv_1,\mv_2,\dots)$, using again that $\Graph$ is equilateral.
\end{proof}

We next recall a classic result on symmetric, discrete random walks, see for instance \cite{Lov93}.

\begin{proposition}\label{prop:expectation_random_walk}
	Let $\mG$ be a finite and connected combinatorial graph.
	Then, for any starting vertex $\mv_0 \in \mV$, the random variable $\tau_{\mv_0}$ is an integrable random variable with
	\[
	\E_{\mv_0}
	\left[
		\tau_{\mv_0}
	\right]
	=
	\sum_{n = 1}^\infty
	n	
	\PP_{\mv_0}
	\left[
		\tau_{\mv_0} = n
	\right]
	=
	\frac{2	\# \mE}{\deg( \mv_0 )},
	\]
	where $\# \mE$ denotes the cardinality of $\mE$.
\end{proposition}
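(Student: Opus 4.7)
The plan is to reduce this to the classical \emph{Kac lemma} for the mean return time of an irreducible Markov chain on a finite state space, which expresses $\E_{\mv_0}[\tau_{\mv_0}]$ as the reciprocal of the stationary mass of $\mv_0$. First, I would note that $(\mv_n)_{n \in \N_0}$ is a time-homogeneous Markov chain on the finite state space $\mV$ with transition matrix $P(\mv,\mw) = \mathbf{1}_{\mv \sim \mw}/\deg(\mv)$. Since $\mG$ is connected, the chain is irreducible, hence (the state space being finite) positive recurrent, which already yields that $\tau_{\mv_0}$ is almost surely finite and integrable.

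Next, I would identify the stationary distribution explicitly. The natural candidate is
\[
\pi(\mv) := \frac{\deg(\mv)}{2 \# \mE}, \qquad \mv \in \mV,
\]
which is a probability measure by the handshake lemma $\sum_{\mv \in \mV} \deg(\mv) = 2 \# \mE$. A one-line detailed-balance check gives
\[
\pi(\mv) P(\mv,\mw) \;=\; \frac{\mathbf{1}_{\mv \sim \mw}}{2 \# \mE} \;=\; \pi(\mw) P(\mw,\mv),
\]
so $\pi$ is reversible, and in particular stationary. By irreducibility, it is the unique stationary distribution.

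Finally, I would invoke Kac's formula, which asserts that for any irreducible positive recurrent Markov chain with stationary distribution $\pi$,
\[
\E_{\mv_0}[\tau_{\mv_0}] \;=\; \frac{1}{\pi(\mv_0)} \;=\; \frac{2 \# \mE}{\deg(\mv_0)},
\]
which is the claim. Since the proposition is invoked as a classical fact citing \cite{Lov93}, I would treat Kac's formula as a black box; otherwise, it admits a short self-contained proof via the occupation-time interpretation of $\pi$, writing $\pi(\mv_0)$ as the $\PP_{\mv_0}$-expected fraction of time spent at $\mv_0$ over a return excursion. The only step that requires any care is confirming integrability, which is immediate from finiteness and irreducibility; there is no genuine obstacle here.
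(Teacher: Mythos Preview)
Your argument is correct and is exactly the standard proof: identify the simple random walk as an irreducible Markov chain on a finite state space, verify that $\pi(\mv) = \deg(\mv)/(2\#\mE)$ is the (reversible) stationary distribution via detailed balance and the handshake lemma, and then apply Kac's formula $\E_{\mv_0}[\tau_{\mv_0}] = 1/\pi(\mv_0)$. The paper does not supply its own proof of this proposition; it states it as a classical fact and refers to \cite{Lov93}, where precisely this argument appears. So your proposal is not merely consistent with the paper's approach, it \emph{is} the approach the cited reference uses.
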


\begin{rem}
	In our proof of the Faber--Krahn inequality for small times, Proposition~\ref{prop:expectation_random_walk} seems to be the key ``new'' ingredient, at least from the perspective of path expansions of the heat content.
	It will allow to explicitly calculate certain infinite sums arising in the path expansions arising in \autoref{thm:probabilistic-heat-content-formula} instead of estimating them.
	We should also emphasize that this is a particularly ``lucky'' circumstance, related to the fact that an explicit and very simple expression for the expectation of the return time of the discrete, symmetric random walks exist.
\end{rem}

\section{Proof of the Faber--Krahn inequality for small times}
\label{sec:proof_small_times}

We now formulate a probabilistic expression for the heat content on equilateral metric graphs.

\begin{theorem}\label{thm:probabilistic-heat-content-formula}
	Let $\Graph$ be a compact, equilateral metric graph and let $\emptyset \subsetneq \mV_\mathrm{D}\subset \mV$.
	Then, for any $t > 0$, the heat content of $\Graph$ at time $t > 0$ satisfies
	\begin{equation}
		\label{eq:probabilistic_expression_heat_content}
		\heatcont(\Graph; \mv_{\mathrm{D}} )
		=
		\frac{\deg(\mv_{\mathrm{D}})}{2}
		\left[
		\alpha_0(t)
		\E_{\mv_{\mathrm{D}}}[\tau_{\mv_{\mathrm{D}}}]
		+
		\sum_{n = 1}^\infty
		\left(
		\alpha_{n}(t) - \alpha_{n-1}(t)
		\right)
		\E_{\mv_{\mathrm{D}}}\left[(\tau_{\mv_{\mathrm{D}}}-n) \mathbf{1}_{\{\tau_{\mv_{\mathrm{D}}} \geq n+1\}} \right]		
		\right],
	\end{equation}
	where
	\begin{equation}
	\label{eq:defintition_alpha}
	\alpha_n(t)
	:=
	\int_0^\ell
	\mathbb{P}_x
	\left[
		\eta_0
		+
		\sum_{k = 1}^n
		(\eta_k - \eta_{k-1})
		\geq t
	\right]
	\mathrm{d}x.
	\end{equation}
\end{theorem}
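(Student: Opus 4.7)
The plan is to combine the Feynman--Kac formula with a telescoping decomposition of the Brownian motion's first hit of $\mv_{\mathrm{D}}$ according to how many vertex-to-vertex excursions have occurred. Concretely, I would start from
\[
\heatcont(\Graph; \mv_{\mathrm{D}}) = \int_\Graph \PP_x\!\left[\tau_{\mv_{\mathrm{D}}}^{\mathrm{BM}} \geq t\right] \ud x,
\]
observe that $\tau_{\mv_{\mathrm{D}}}^{\mathrm{BM}} = \eta_{N}$ with $N := \inf\{n \in \mathbb{N}_0 : \mv_n = \mv_{\mathrm{D}}\}$, and expand the indicator via the monotone telescoping
\[
\mathbf{1}_{\{\eta_N \geq t\}} = \mathbf{1}_{\{\eta_0 \geq t\}} + \sum_{n=1}^\infty \big(\mathbf{1}_{\{\eta_n \geq t\}} - \mathbf{1}_{\{\eta_{n-1} \geq t\}}\big) \mathbf{1}_{\{N \geq n\}}.
\]
Taking expectation and integrating over $\Graph$ then produces a leading term $\int_\Graph \PP_x[\eta_0 \geq t] \ud x$ together with, for each $n \geq 1$, a term built by summing over the first vertex $\mv_0 = \mv \neq \mv_{\mathrm{D}}$ visited by the Brownian motion. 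The remaining job is to evaluate these in terms of expectations of discrete random walks starting at $\mv_{\mathrm{D}}$.

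The first step I would prove is the integrated factorization
\[
\int_\Graph \PP_x[\eta_n \geq t, \mv_0 = \mv] \ud x = \frac{\deg(\mv)}{2}\, \alpha_n(t)
\]
for every $\mv \in \mV$ and $n \geq 0$. By \autoref{thm:densities-random-walk}, the increments $\eta_k - \eta_{k-1}$ for $k \geq 1$ are iid and independent of $(x, \eta_0, \mv_0)$, so conditioning on their sum reduces the identity to the case $n = 0$. For $n=0$ I would exploit the reflection symmetry $x \mapsto \ell - x$ on each edge incident to $\mv$: this forces $\int_0^\ell \PP_x[\eta_0 \geq t, \mv_0 = \mv] \ud x = \tfrac{1}{2} \int_0^\ell \PP_x[\eta_0 \geq t] \ud x$ per edge, and summing over the $\deg(\mv)$ incident edges gives the claim. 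Combining this with the fact that, conditionally on $\mv_0 = \mv$, the discrete walk $(\mv_1, \mv_2, \dots)$ is a symmetric random walk on $\mG$ starting at $\mv$, independent of the iid increments, each $n$-th term of the telescoping expansion factors as
\[
\frac{\alpha_n(t) - \alpha_{n-1}(t)}{2} \sum_{\mv \neq \mv_{\mathrm{D}}} \deg(\mv)\, \PP_\mv[\tau_{\mv_{\mathrm{D}}} \geq n].
\]

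The second ingredient is reversibility of the symmetric random walk with respect to $\pi(\mv) = \deg(\mv)/(2\#\mE)$. For each $k \geq 1$, reversing a path $\mv \to \dots \to \mv_{\mathrm{D}}$ of length $k$ (avoiding $\mv_{\mathrm{D}}$ in between) into the corresponding path $\mv_{\mathrm{D}} \to \dots \to \mv$ (avoiding $\mv_{\mathrm{D}}$ after step $0$) and comparing the products of $1/\deg$ factors yields
\[
\sum_{\mv \neq \mv_{\mathrm{D}}} \deg(\mv)\, \PP_\mv[\tau_{\mv_{\mathrm{D}}} = k] = \deg(\mv_{\mathrm{D}})\, \PP_{\mv_{\mathrm{D}}}[\tau_{\mv_{\mathrm{D}}} \geq k+1],
\]
which, summed in $k \geq n$, produces
\[
\sum_{\mv \neq \mv_{\mathrm{D}}} \deg(\mv)\, \PP_\mv[\tau_{\mv_{\mathrm{D}}} \geq n] = \deg(\mv_{\mathrm{D}})\, \E_{\mv_{\mathrm{D}}}\!\left[(\tau_{\mv_{\mathrm{D}}} - n) \mathbf{1}_{\{\tau_{\mv_{\mathrm{D}}} \geq n+1\}}\right].
\]
Substituting back turns each $n$-th term into $\tfrac{\deg(\mv_{\mathrm{D}})}{2}(\alpha_n(t) - \alpha_{n-1}(t))\, \E_{\mv_{\mathrm{D}}}[(\tau_{\mv_{\mathrm{D}}} - n) \mathbf{1}_{\{\tau_{\mv_{\mathrm{D}}} \geq n+1\}}]$, while the leading term $\int_\Graph \PP_x[\eta_0 \geq t] \ud x = \#\mE\cdot \alpha_0(t)$ matches $\tfrac{\deg(\mv_{\mathrm{D}})}{2}\, \alpha_0(t)\, \E_{\mv_{\mathrm{D}}}[\tau_{\mv_{\mathrm{D}}}]$ by $\E_{\mv_{\mathrm{D}}}[\tau_{\mv_{\mathrm{D}}}] = 2\#\mE/\deg(\mv_{\mathrm{D}})$ from \autoref{prop:expectation_random_walk}.

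The hardest part will be the first integrated factorization: under $\PP_x$ for fixed $x$ the variables $\eta_0$ and $\mv_0$ are genuinely correlated (the exit time and exit point of a $1$D Brownian motion on an interval are not independent), and the clean factorization only appears after averaging $x$ over an entire edge, thanks to the reflection symmetry. The reversibility identity should follow straightforwardly from path-reversal, and convergence of the infinite series together with the required Fubini-type interchanges are expected to be justified by the rapid decay of $\alpha_n(t)$ inherited from~\eqref{eq:limit-eta-n-in-difference} and the integrability of $\tau_{\mv_{\mathrm{D}}}$ established in \autoref{prop:expectation_random_walk}.
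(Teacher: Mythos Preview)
Your proposal is correct and follows essentially the same route as the paper: Feynman--Kac, edge-averaging via the reflection $x\mapsto\ell-x$ to decouple $\eta_0$ from $\mv_0$, and path-reversal (what you call reversibility) to switch from walks ending at $\mv_{\mathrm{D}}$ to walks starting there. The only cosmetic difference is that you telescope on $\{N\geq n\}$ up front, whereas the paper decomposes on $\{N=n\}$, first obtains the intermediate identity $\heatcont=\tfrac{\deg(\mv_{\mathrm{D}})}{2}\sum_{n\geq 0}\alpha_n(t)\,\PP_{\mv_{\mathrm{D}}}[\tau_{\mv_{\mathrm{D}}}\geq n+1]$, and then applies a separate Abel-summation lemma to reach the same formula.
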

\begin{rem}
\begin{enumerate}[(i)]
\item By the identification of \emph{all} Dirichlet vertices as suggested in \autoref{conv:dirichlet-singleton}, we observe for a general set of Dirichlet vertices $\mV_\mathrm{D} \subset \mV$, all of which are of degree one, that the heat content at time $t>0$ can be represented as
\begin{align*}
\heatcont(\Graph; \mV_\mathrm{D})
		=
		\frac{\# \mV_\mathrm{D}}{2}
		\left[
		\alpha_0(t)
		\E_{\mV_{\mathrm{D}}}[\tau_{\mV_{\mathrm{D}}}]
		+
		\sum_{n = 1}^\infty
		\left(
		\alpha_{n}(t) - \alpha_{n-1}(t)
		\right)
		\E_{\mV_{\mathrm{D}}}\Big[(\tau_{\mV_{\mathrm{D}}} -n)\mathbf{1}_{\{\tau_{\mV_{\mathrm{D}}} \geq n+1\}} \Big] 		
		\right],
\end{align*}
where $\tau_{\mV_\mathrm{D}}$ represents the first hitting time of the Dirichlet vertex set with respect to the symmetric random walk and $\# \mV_\mathrm{D}$ the cardinality of $\mV_\mathrm{D}$. 
\item Let us also note that other expressions for the heat content on metric graphs \ can be derived using the \emph{Roth formula} for the heat kernel~\cite{Rot83, Rot84,BecGreMug21, BorHarJon22, KosPotSch07, Cat98, Nic97}: this is done in \cite{BifMug24}.
	However, our expression, spelled out in Theorem~\ref{thm:probabilistic-heat-content-formula} below, has the advantage that it will allow to invoke~\autoref{prop:expectation_random_walk}.
	The heat content formula obtained in \cite{BifMug24} will carry terms of different signs and it is not clear how to manipulate them into a similar form , even though they have to be equivalent.
\end{enumerate}
\end{rem} 
Before we can prove \autoref{thm:probabilistic-heat-content-formula}, we state two preparatory lemmas collecting useful properties of the coefficients $\alpha_n(t)$:
\begin{lemma}
	\label{lem:alpha_n_1}
	The functions $\mathbb{N}_0 \times (0, \infty) \ni (n,t) \mapsto \alpha_n(t) \in (0, \infty)$ in~\eqref{eq:defintition_alpha}, are continuous and strictly monotonously \emph{decreasing} in $t$ and strictly monotonously \emph{increasing} $n$, and satisfy
	\[
	\lim_{t \rightarrow 0^+}
	\alpha_n(t)
	=
	\ell,
	\quad
	\lim_{t \rightarrow +\infty}	
	\alpha_n(t)
	=
	0,
	\quad
	\text{and}
	\quad
	\lim_{n \to \infty}
	\alpha_n 
	=
	\ell.
	\]
\end{lemma}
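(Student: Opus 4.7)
The plan is to observe that the quantity inside the probability in \eqref{eq:defintition_alpha} telescopes,
\[
\eta_0 + \sum_{k=1}^n (\eta_k - \eta_{k-1}) = \eta_n,
\]
so that $\alpha_n(t) = \int_0^\ell \mathbb{P}_x[\eta_n \geq t]\,\mathrm{d}x$. Every assertion then reduces to properties of the random variable $\eta_n$, i.e.\ the time until the Brownian motion has hit $n+1$ new vertices. By \autoref{thm:densities-random-walk}~(i), $\eta_n$ is the sum of $\eta_0$ (whose distribution given by the uniformly convergent sine series~\eqref{eq:first-hitting-time-density} depends continuously on $x \in [0,\ell]$) and $n$ further i.i.d.\ increments distributed according to \eqref{eq:second-plus-hitting-time-density}; all of these summands are strictly positive and almost surely finite, and possess smooth densities on $(0,\infty)$.

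The bound $0 < \alpha_n(t) < \ell$ is immediate from $0 < \mathbb{P}_x[\eta_n \geq t] < 1$ on $x \in (0, \ell)$. Monotonicity in $t$ follows from the inclusion $\{\eta_n \geq t_2\} \subset \{\eta_n \geq t_1\}$ for $t_1 < t_2$; it is strict because $\eta_n$ has a positive density on $(0,\infty)$ (as the convolution of positive densities), so $\mathbb{P}_x[t_1 \leq \eta_n < t_2] > 0$ for a positive-measure set of $x$. Monotonicity in $n$ is the inclusion $\{\eta_{n+1} \geq t\} \supset \{\eta_n \geq t\}$ (since $\eta_{n+1} \geq \eta_n$ almost surely), again strict because $\mathbb{P}_x[\eta_n < t \leq \eta_{n+1}] > 0$ on a set of positive measure by the positive density of the fresh increment $\eta_{n+1} - \eta_n$. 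Continuity in $t$ for each $n$ is a consequence of the continuity of the cumulative distribution function of $\eta_n$ combined with dominated convergence (the integrand is bounded by $1$); continuity in $(n,t)$ at fixed $n$ is the same statement.

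For the three limits, I would apply dominated convergence to the integral in each case. As $t \to 0^+$, $\mathbb{P}_x[\eta_n \geq t] \to 1$ pointwise because $\eta_n > 0$ almost surely, yielding $\alpha_n(t) \to \ell$. As $t \to \infty$, $\mathbb{P}_x[\eta_n \geq t] \to 0$ because $\eta_n < \infty$ almost surely, yielding $\alpha_n(t) \to 0$. For $n \to \infty$ with $t > 0$ fixed, the partial sums $\sum_{k=1}^n(\eta_k - \eta_{k-1})$ form a random walk whose i.i.d.\ increments are almost surely strictly positive; hence they diverge almost surely (either via the strong law of large numbers, noting the common law stochastically dominates a positive constant times a Bernoulli through $\mathbb{P}[\eta_{k+1} - \eta_k \geq \varepsilon] > 0$ for some $\varepsilon > 0$, or directly via Borel--Cantelli applied to these events). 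Consequently $\eta_n \to \infty$ almost surely, $\mathbb{P}_x[\eta_n \geq t] \to 1$, and $\alpha_n(t) \to \ell$.

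The only mildly delicate points are the strictness of the two monotonicities and the divergence $\eta_n \to \infty$; I expect the latter to be the main obstacle in the sense that one must be careful that the increment law \eqref{eq:second-plus-hitting-time-density} has infinite mean, so the strong law in the $L^1$ form does not apply directly. However, positivity and the $\{\eta_{k+1}-\eta_k \geq \varepsilon\}$ Borel--Cantelli argument bypasses this issue entirely.
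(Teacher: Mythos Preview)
Your proposal is correct and takes the same approach as the paper, which dispatches the lemma in a single sentence (``an immediate consequence of almost sure positivity of $\eta_0$, of the definition of the $\eta_k - \eta_{k-1}$, and of~\eqref{eq:defintition_alpha}''); you have simply filled in the details the paper leaves implicit. Your Borel--Cantelli argument for $\eta_n \to \infty$ is a clean way to sidestep any integrability concern about the increment law, and in fact the bare observation that a sum of i.i.d.\ almost surely positive random variables diverges almost surely already suffices.
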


\begin{proof}
	This is an immediate consequence of almost sure positivity of $\eta_0$, of the definition of the $\eta_{k} - \eta_{k-1}$, and of~\eqref{eq:defintition_alpha}.
\end{proof}
\begin{lemma}\label{lem:re-arrangement}
Let $(\mu_n)_{n \in \mathbb{N}_0} \in \ell^1(\mathbb{N}_0)$ be a summable sequence of non-negative numbers and define $\beta_n := \sum_{j=n}^\infty \mu_j$ for $n \in \mathbb{N}_0$. Then:
\begin{align}\label{eq:re-arrangement}
\alpha_0(t)\beta_0 + \sum_{n=1}^\infty \big(\alpha_n(t)-\alpha_{n-1}(t) \big)\beta_n = \sum_{n=0}^\infty \mu_n \alpha_n(t) \quad \text{for all $t>0$.}
\end{align}
\end{lemma}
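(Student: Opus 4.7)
The plan is to recognise \eqref{eq:re-arrangement} as an Abel-type summation-by-parts identity which, thanks to the non-negativity of the data, can be proved by a direct application of Tonelli's theorem for non-negative double series rather than by a more delicate limit argument with partial sums.

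The first step is to write each $\alpha_n(t)$ telescopically as
\[
\alpha_n(t) = \alpha_0(t) + \sum_{k=1}^n \bigl(\alpha_k(t) - \alpha_{k-1}(t)\bigr),
\]
which is valid for every $n \in \mathbb{N}_0$ (with an empty sum for $n=0$). Substituting this into the right-hand side of \eqref{eq:re-arrangement} and splitting off the $\alpha_0(t)$-term would give
\[
\sum_{n=0}^\infty \mu_n \alpha_n(t) = \alpha_0(t) \sum_{n=0}^\infty \mu_n + \sum_{n=0}^\infty \sum_{k=1}^n \mu_n \bigl(\alpha_k(t) - \alpha_{k-1}(t)\bigr).
\]
By definition of $\beta_0$, the first term on the right is $\alpha_0(t) \beta_0$. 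For the second term, I would exchange the order of summation, collecting terms in $k$ first, to obtain
\[
\sum_{k=1}^\infty \bigl(\alpha_k(t) - \alpha_{k-1}(t)\bigr) \sum_{n=k}^\infty \mu_n = \sum_{k=1}^\infty \bigl(\alpha_k(t) - \alpha_{k-1}(t)\bigr) \beta_k,
\]
using the definition of $\beta_k$. Combining both contributions recovers exactly the left-hand side of \eqref{eq:re-arrangement}.

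The only conceptual obstacle is justifying the swap of summation order, and this is the step I would dwell on most. However, it is in fact painless here: by assumption $\mu_n \geq 0$ for all $n \in \mathbb{N}_0$, and \autoref{lem:alpha_n_1} guarantees that $n \mapsto \alpha_n(t)$ is monotonously increasing, so $\alpha_k(t) - \alpha_{k-1}(t) \geq 0$ for all $k \geq 1$. Consequently the double sum has non-negative entries and Tonelli's theorem applies unconditionally. The finiteness of all quantities is ensured separately by $\alpha_n(t) \leq \ell$ (again from \autoref{lem:alpha_n_1}) and $(\mu_n) \in \ell^1(\mathbb{N}_0)$, but this is not needed for the rearrangement itself. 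No asymptotic analysis of boundary terms (as one would have in a classical summation-by-parts argument on finite partial sums and a subsequent limit $N \to \infty$) is required in this formulation.
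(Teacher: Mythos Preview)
Your proof is correct and achieves the same Abel-type rearrangement as the paper, but via a slightly different route. The paper works with finite partial sums: it rewrites $(\alpha_n(t) - \alpha_{n-1}(t))\beta_n = \alpha_n(t)\beta_n - \alpha_{n-1}(t)\beta_{n-1} + \alpha_{n-1}(t)\mu_{n-1}$, telescopes to obtain
\[
\alpha_0(t)\beta_0 + \sum_{n=1}^N (\alpha_n(t)-\alpha_{n-1}(t))\beta_n = \alpha_N(t)\beta_N + \sum_{n=0}^{N-1} \alpha_n(t)\mu_n,
\]
and then lets $N \to \infty$, using $\alpha_N(t) \leq \ell$ together with $\beta_N \to 0$ to kill the boundary term. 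You instead expand $\alpha_n(t)$ telescopically on the right-hand side and invoke Tonelli for non-negative double series, which sidesteps the boundary-term analysis entirely --- exactly as you anticipated in your final paragraph. Your version is arguably cleaner here since non-negativity is already in the hypotheses; the paper's partial-sum version has the minor advantage that it would extend verbatim to signed $(\mu_n)$ provided one still has $\alpha_N(t)\beta_N \to 0$.
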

\begin{rem}
Note that by \autoref{lem:alpha_n_1}, the sequence $(\alpha_n(t))_{n \in \mathbb{N}_0}$ is strictly monotonously decrasing in $n$ (for fixed $t>0$) and thus this sequence is bounded by \autoref{thm:densities-random-walk}. Hence, the right-hand side of \eqref{eq:re-arrangement} indeed exists.
\end{rem}
\begin{proof}
Since $\beta_{n-1} = \mu_{n-1} + \beta_n$ for every $n \in \mathbb{N}$ and the $\alpha_n(t)$ are monotonously increasing in $n \in \mathbb{N}$, we obtain, since the left-hand side of \eqref{eq:re-arrangement} can be re-written as a telecopic series,
\begin{align*}
	\alpha_0(t)\beta_0 + \sum_{n=1}^N (\alpha_n(t)-\alpha_{n-1}(t))\beta_n 
	&= 
	\alpha_0(t)\beta_0 + \sum_{n=1}^N \big(\alpha_{n}(t)\beta_n - \alpha_{n-1}(t)\beta_{n-1} \big) + \sum_{n=1}^N \alpha_{n-1}(t) \mu_{n-1} 
	\\
	&= \alpha_N(t) \beta_N + \sum_{n=0}^{N-1} \alpha_n(t)\mu_n, 
\end{align*}
for every $N \in \mathbb{N}$. 
Sending $N \to \infty$ yields the statement, taking into account that $(\mu_n)_{n \in \mathbb{N}_0} \in \ell^1(\mathbb{N}_0)$ and $\alpha_N(t) \leq \ell$ imply $\alpha_N(t)\beta_N \rightarrow 0$.
%
\end{proof}
We now have everything at hand to prove \autoref{thm:probabilistic-heat-content-formula}.
\begin{proof}[Proof of \autoref{thm:probabilistic-heat-content-formula}]
	Starting with~\eqref{eq:Feynman-Kac}, we decompose the set of all Brownian paths hitting the Dirichlet vertex $\mv_\mathrm{D}$ (the set of paths never hitting $\mv_\mathrm{D}$ has probability zero) according to their \emph{combinatorial first hitting time} of $\mv_\mathrm{D}$, that is the smallest $n \in \mathbb{N}_0 = \{0,1,2, \dots \}$ such that $\mv_n = \mv_{\mathrm{D}}$, and then distinguish along the vertex $\mv_0$ to be hit first
\begin{align}
	\label{eq:heatcont-matthias}
\begin{aligned}
	\heatcont(\Graph; \mv_\mathrm{D} )
	&= 
	\int_\Graph
	\PP_x
	\left[
		\tau_{\mv_\mathrm{D}}^{\mathrm{BM}} \geq t 
	\right] 
	\ud 
	x 
	= 
	\sum_{\me \in \mE}
	\int_{0}^{\ell_\me}
	\E_x
	\left[	
	\sum_{n = 0}^\infty
	\mathbf{1}_{\{\eta_n = \tau_{\mv_\mathrm{D}}^{\mathrm{BM}} \}}
	\mathbf{1}_{\{ \eta_n \geq t \}}
	\right]
	\mathrm{d} x
	\\
	&=
	\sum_{n = 0}^\infty
    \sum_{\me \in \mE}
    \sum_{\mv \sim \me}
    \int_0^{\ell}
	\E_x
	\left[
		\mathbf{1}_{\{\mv_0 = \mv \}}
		\mathbf{1}_{\big\{ \mv_n = B_{\tau_{\mv_\mathrm{D}}^{\mathrm{BM}}}\big\} }
		\mathbf{1}_{\{\eta_n \geq t\}}
	\right]
	\ud x.
	\end{aligned}
	\end{align}
	The integration from $0$ to $\ell$ can be interpreted as an expectation with respect to another uniformly distributed random variable (multiplied with the factor $\ell$), determining the starting position on the interval.
	Writing $\PP_{\me}$ for the probability with respect to the joint distribution and conditioning on $\{ \mv_0 = \mv \}$, we have
	\begin{align*}
	\heatcont(\Graph; \mv_\mathrm{D} )
	&=
	\sum_{n = 0}^\infty
    \sum_{\me \in \mE}
    \sum_{\mv \sim \me}
    \int_0^{\ell}
    \PP_{\me}
    \left[
		\mathbf{1}_{\big\{\mv_n = B_{\tau_{\mv_\mathrm{D}}^{\mathrm{BM}}}\big\}}
		\mathbf{1}_{\{\eta_n \geq t\}}
		\: \Bigg\vert \:
		\mathbf{1}_{\{\mv_0 = \mv\}}
	\right]
	\PP_{\me}
	\left[
		\mv_0 = \mv
	\right] \ud x
	\\
	&=
	\sum_{n = 0}^\infty
	\sum_{\mv \in \mV}
	\frac{\deg(\mv)}{2}
	\, \ell \,
	\PP_{\me}
    \left[
		\mathbf{1}_{\big\{\mv_n = B_{\tau_{\mv_\mathrm{D}}^{\mathrm{BM}}}\big\}}
		\mathbf{1}_{\{\eta_n \geq t\}}
		\: \Bigg\vert \:
		\mathbf{1}_{\{\mv_0 = \mv\}}
	\right]
	\end{align*}
	where we used that every starting vertex $\mv = \mv_0$ appears exactly $\deg(\mv)$ many times in the sum and that $\PP_{\me} [ \mv_0 = \mv	] = \frac{1}{2}$ if $\mv$ is an end point of $\me$. 
	Now, the events 
	\[
	\left\{ \mv_n = B_{\tau_{\mv_\mathrm{D}}^{\mathrm{BM}}} \right\} \quad \text{and} \quad \{ \eta_n \geq t \}
	\]
	are stochastically independent in the $\sigma$-algebra, generated by $\{ \mv_0 = \mv \}$, hence, writing $\PP$ for the  probability measure with respect to the random variables $\eta_j$ (which do in particular not depend on the edge $\me$), and $\PP_{\mv_0}$ for the probability measure associated to the discrete random walk $(\mv_0, \mv_1, \dots)$ starting at $\mv_0$, introduced in \autoref{def:symmetric-random-walk}, we have
	\begin{align*}
	\heatcont(\Graph; \mv_\mathrm{D} )
	&=
	\sum_{n = 0}^\infty
	\sum_{\mv_0 \in \mV}
	\frac{\deg( \mv_0 )}{2} \PP_{\mv_0} 
	\left[
		\tau_{\mv_\mathrm{D}} = n
	\right]
	\ell 
	\,
	\PP[\eta_n \geq t]
	\\&=
	\sum_{n = 0}^\infty
	\sum_{\mv_0 \in \mV}
	\frac{\deg(\mv_0)}{2}
	\PP_{\mv_0} 
	\left[
		\tau_{\mv_\mathrm{D}} = n
	\right]
	\alpha_n(t),
	\end{align*}
	since $\eta_n = \eta_0 + \sum_{k=1}^n (\eta_k-\eta_{k-1})$ for every $n \in \mathbb{N}_0$ and thus (as $\eta_n$ does not depend on $\me$, that is, not on $x \in \me$)
	\begin{align*}
	\alpha_n(t) = \int_0^\ell \PP[\eta_n \geq t] \ud x = \ell \, \PP[\eta_n \geq t] \quad \text{for every $t>0$.}
	\end{align*}
	Next, we evaluate on the probability $\PP_{\mv_0}[\tau_{\mv_\mathrm{D}} \geq n]$ by summing over the individual probabilities of all paths $(\mv_0, \mv_1, \dots, \mv_n = \mv_\mathrm{D})$ emanating from $\mv_0$ and hitting the Dirichlet vertex $\mv_\mathrm{D}$ for the first time in the $n$-th step. 	Noting that a path $(\mv_0,\mv_1,\dots \mv_n=\mv_\mathrm{D})$ has probability
	\[
	\prod_{j = 0}^{n-1}
		\frac{1}{\deg(\mv_j )}
	\]
	we obtain
	\begin{align}\label{eq:combinatorial-prob-heat-content}
	\begin{aligned}	
	\heatcont(\Graph; \mv_\mathrm{D} )
	&=
	\sum_{n = 0}^\infty
	\sum_{\mv_0 \in \mV}
	\sum_{\substack{\text{$(\mv_0, \mv_1, \dots, \mv_n = \mv_\mathrm{D})$ path}\\
    \text{with $\mv_{\mathrm{D}} \not \in \{ \mv_1, \dots, \mv_{n-1}\}$}}}
    \frac{\deg( \mv_0 )}{2}
    \left(	
	\prod_{j = 0}^{n-1}
		\frac{1}{\deg(\mv_j )}
	\right)
	\alpha_n(t)
	\\
	&=
	\frac{1}{2}
	\left[
	\deg( \mv_{\mathrm{D}})
	\alpha_0(t)
	+
	\sum_{n = 1}^\infty
	\sum_{\mv_{\mathrm{D}} \neq \mv_0 \in \mV}
	\sum_{\substack{\text{$(\mv_0, \mv_1, \dots, \mv_n = \mv_\mathrm{D})$ path}\\
    \text{with $\mv_{\mathrm{D}} \not \in \{ \mv_1, \dots, \mv_{n-1}\}$}}}
    \left(	
	\prod_{j = 1}^{n-1}
		\frac{1}{\deg( \mv_j )}
	\right)
	\alpha_n(t)
	\right]
	\\
	&=
	\frac{\deg(\mv_{\mathrm{D}})}{2}
	\left[
	\alpha_0(t)
	+
	\sum_{n = 1}^\infty
	\sum_{
	\substack{
		\text{$(\mv_{\mathrm{D}} = \mw_0, \mw_1, \dots, \mw_n)$ path}\\
    \text{with $\mv_{\mathrm{D}} \not \in \{ \mw_1, \dots, \mw_{n}\}$}
    	}
	}
    \left(	
	\prod_{j = 0}^{n-1}
		\frac{1}{\deg(\mw_j)}
	\right)
	\alpha_n(t)
	\right]
	\end{aligned}
\end{align}
where in the last step, we flipped the orientation of each path and multiplied with $\deg(\mv_{\mathrm{D}} )$ in order to compensate the factor $\frac{1}{\deg( \mw_0 )}$ for $j = 0$ in the inverted paths.
However, the inner sum over the product of inverse degrees along paths is now equal to $\PP_{\mv_\mathrm{D}}[\tau_{\mv_\mathrm{D}} \geq n + 1]$, more precisely,
\begin{align*}
\sum_{
	\substack{
		\text{$(\mv_{\mathrm{D}} = \mw_0, \mw_1, \dots, \mw_n)$ path}\\
    \text{with $\mv_{\mathrm{D}} \not \in \{ \mw_1, \dots, \mw_{n}\}$}
    	}
	}
    \left(	
	\prod_{j = 0}^{n-1}
		\frac{1}{\deg(\mw_j)}
	\right) = \PP_{\mv_\mathrm{D}}[\tau_{\mv_\mathrm{D}} \geq n + 1] \quad \text{for any $n \in \mathbb{N}_0$,}
\end{align*}
whence,
\begin{align}\label{eq:pre-heat-content-formula}
	\heatcont(\Graph;  \mv_{\mathrm{D}} )
	&=
	\frac{\deg( \mv_{\mathrm{D}} )}{2}
	\left[
		\alpha_0(t)
		+
		\sum_{n = 1}^\infty
		\alpha_n(t)
		\PP_{\mv_\mathrm{D}}
		[
		\tau_{\mv_\mathrm{D}} \geq n + 1
		]
	\right].
\end{align}
Considering now the sequence $(\mu_n)_{n \in \mathbb{N}_0}$ defined by $\mu_n := \PP_{\mv_\mathrm{D}}[\tau_{\mv_\mathrm{D}} \geq n+1]$ for $n \in \mathbb{N}_0$, we observe
\begin{align*}
\sum_{n=0}^\infty \mu_n = \sum_{j=0}^\infty j \ \PP_{\mv_\mathrm{D}}[\tau_{\mv_\mathrm{D}} = j] = \mathbb{E}_{\mv_\mathrm{D}}[\tau_{\mv_\mathrm{D}}] = 2\# \mE,
\end{align*}
due to \autoref{prop:expectation_random_walk}, in particular $(\mu_n)_{n \in \mathbb{N}} \in \ell^1(\mathbb{N}_0)$. Therefore, \autoref{lem:re-arrangement} is applicable and eventually implies according to \eqref{eq:pre-heat-content-formula}
\begin{align*}
	\heatcont(\Graph;  \mv_{\mathrm{D}} ) 
	&=
	\frac{\deg( \mv_{\mathrm{D}} )}{2}
	\left[
	\alpha_0(t)
	\sum_{j = 0}^\infty
		\PP_{\mv_\mathrm{D}}
		[
		\tau_{\mv_\mathrm{D}} \geq j+1 
		]	
	+
	\sum_{n = 1}^\infty
	\left(
		\alpha_{n}(t) - \alpha_{n-1}(t)
	\right)
	\sum_{j = n}^\infty
		\PP_{\mv_\mathrm{D}}
		[
		\tau_{\mv_\mathrm{D}} \geq j+1 
		]
	\right]
	\\
	&=
	\frac{\deg( \mv_{\mathrm{D}} )}{2}
	\left[
	\alpha_0(t)
	\E_{\mv_{\mathrm{D}}}[\tau_{\mv_{\mathrm{D}}}]
	+
	\sum_{n = 1}^\infty
	\left(
		\alpha_{n}(t) - \alpha_{n-1}(t)
	\right)	
	\sum_{j = n+1}^\infty
		(j-n)
		\PP_{\mv_\mathrm{D}}
		[
		\tau_{\mv_\mathrm{D}} = j
		]
	\right]
	\\
	&=
	\frac{\deg( \mv_{\mathrm{D}} )}{2}
		\left[
		\alpha_0(t)
		\E_{\mv_{\mathrm{D}}}[\tau_{\mv_{\mathrm{D}}}]
		+
		\sum_{n = 1}^\infty
		\left(
		\alpha_{n}(t) - \alpha_{n-1}(t)
		\right)
		\E_{\mv_{\mathrm{D}}}\left[(\tau_{\mv_{\mathrm{D}}} -n)\mathbf{1}_{\{\tau_{\mv_{\mathrm{D}} \geq n+1}\} } \right]		
		\right].
\end{align*}
	This completes the proof.
\end{proof}
\begin{rem}
Note that, interpreting $\prod_{j=0}^{-1} \frac{1}{\deg(\mw_j)} = 1$ as the empty product, \eqref{eq:combinatorial-prob-heat-content} can be written in the shortened form
\begin{align}
\heatcont(\Graph;\{ \mv_\mathrm{D} \}) = \frac{\deg(\mv_\mathrm{D})}{2} \sum_{n=0}^\infty \sum_{\mv_0 \in \mV}\sum_{
	\substack{
		\text{$(\mv_{\mathrm{D}} = \mw_0, \mw_1, \dots, \mw_n)$ path}\\
    \text{with $\mv_{\mathrm{D}} \not \in \{ \mw_1, \dots, \mw_{n}\}$}
    	}
	}
    \left(	
	\prod_{j = 0}^{n-1}
		\frac{1}{\deg(\mw_j)}
	\right)
	\alpha_n(t). 
\end{align}
We have decomposed the heat content of a compact finite metric graph $\Graph$ into an infinite sum consisting of terms completely determined by the underlying combinatorial graph $\mG$ and completely one-dimensional terms given by the $\alpha_n(t)$.
\end{rem}

Before turning to the proof of \autoref{thm:main-thm}~\eqref{item:fk-small-thm}, let us discuss once again the quantities $\alpha_n(t)$ appearing in the previous theorem.
First note that the $\alpha_n(t)$ do not depend on the topology of the graph.

Using the aforementioned trick to consider the integral over $x \in [0,\ell]$ in~\eqref{eq:defintition_alpha} as another random variable (up to the normlalization factor $\ell$), independent of the $\eta_j$ and distributed uniformly in $[0, \ell]$.
Defining $X_0$ to be the average value of $\eta_0$, as well as $X_n := \eta_n - \eta_{n-1}$ for $n \in \mathbb{N}$, we can write
\begin{equation}
	\label{eq:terms_in_alpha_independent}
	\alpha_n(t)
	=
	\ell
	\cdot
	\PP
	\left[
		X_0 + X_1 + \dots + X_n \geq t
	\right].	
\end{equation}

The next lemma states that the differences between the $\alpha_k(t)$ are hierarchical in the sense that $\alpha_{k+1}(t) - \alpha_k(t)$, indeed even $\ell - \alpha_k(t)$, will be small compared to $\alpha_k(t) - \alpha_{k-1}(t)$ as $t \to 0$.
This implies that terms associated with shorter paths in the path expansion of Theorem~\ref{thm:probabilistic-heat-content-formula} will strictly dominate the expressions associated with longer paths in the small-time regime.

\begin{lemma}
	\label{lem:alpha_n_2}
	We have for all $k \in \mathbb{N}$
	\[
	\lim_{t \to 0^+}	
	\frac{\ell - \alpha_{k}(t)}{\alpha_{k}(t) - \alpha_{k - 1}(t)}
	=
	0.
	\]
\end{lemma}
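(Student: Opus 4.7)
My approach exploits the probabilistic representation \eqref{eq:terms_in_alpha_independent}, writing $\alpha_n(t) = \ell \cdot \PP[S_n \geq t]$ with $S_n := X_0 + X_1 + \cdots + X_n$, a sum of independent non-negative random variables (the $X_j$ for $j \geq 1$ being additionally i.i.d.\ with CDF given by \eqref{eq:limit-eta-n-in-difference}, as a consequence of \autoref{thm:densities-random-walk}(i)). Since $X_k \geq 0$ almost surely, the event $\{S_k < t\}$ is contained in $\{S_{k-1} < t\}$, so that
\[
\ell - \alpha_k(t) = \ell \cdot \PP[S_k < t] \qquad \text{and} \qquad \alpha_k(t) - \alpha_{k-1}(t) = \ell \cdot \bigl(\PP[S_{k-1} < t] - \PP[S_k < t]\bigr).
\]

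The core of the argument is then the upper bound $\PP[S_k < t] \leq \PP[X_k < t] \cdot \PP[S_{k-1} < t]$. This follows by conditioning on $S_{k-1}$ (which is independent of $X_k$) and using monotonicity of the CDF of $X_k$, i.e.\ $\PP[X_k < t-s] \leq \PP[X_k < t]$ for $s \in [0,t]$:
\[
\PP[S_k < t] = \int_{[0,t)} \PP[X_k < t - s]\, dF_{S_{k-1}}(s) \leq \PP[X_k < t] \cdot \PP[S_{k-1} < t],
\]
where $F_{S_{k-1}}$ denotes the CDF of $S_{k-1}$. Combining this with the identities above yields
\[
\frac{\ell - \alpha_k(t)}{\alpha_k(t) - \alpha_{k-1}(t)} \leq \frac{\PP[X_k < t]}{1 - \PP[X_k < t]}.
\]

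Since $k \geq 1$, identity \eqref{eq:limit-eta-n-in-difference} gives $\PP[X_k < t] = \mathrm{erfc}(\ell/\sqrt{2t})$, which vanishes (indeed exponentially fast) as $t \to 0^+$, so that the right-hand side above tends to $0$ and the proof is complete.

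The argument is short and there is no real obstacle once the probabilistic representation \eqref{eq:terms_in_alpha_independent} is available; conceptually, the point is that the probability of a Brownian motion traversing a full edge of length $\ell$ in a short time $t$ is exponentially small in $1/t$, so that the additional increment $X_k$ becomes asymptotically negligible compared to the partial sum $S_{k-1}$ in the small-time regime. Note that the argument crucially uses $k \geq 1$: the variable $X_0$ has a polynomially decaying left tail at $0$ (owing to the uniform distribution of the starting point on the edge), so the analogous statement for the case comparing $\alpha_0(t)$ to $\alpha_{-1}(t)$ would not be meaningful and is not needed.
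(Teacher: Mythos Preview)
Your proof is correct and follows essentially the same approach as the paper: both arguments reduce the ratio to $\PP[S_k < t]/(\PP[S_{k-1} < t] - \PP[S_k < t])$, establish the product bound $\PP[S_k < t] \leq \PP[X_k < t]\,\PP[S_{k-1} < t]$ via independence and monotonicity of the CDF, and conclude using $\PP[X_k < t] = \mathrm{erfc}(\ell/\sqrt{2t}) \to 0$. Your conditioning formulation via the Lebesgue--Stieltjes integral against $F_{S_{k-1}}$ is arguably a bit cleaner than the paper's density-style notation, but the substance is identical.
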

	
\begin{proof}
	We use~\eqref{eq:terms_in_alpha_independent} to write
	\begin{align*}
	\frac{\ell - \alpha_{k}(t)}{\alpha_{k}(t) - \alpha_{k-1}(t)}
	&=
	\frac
	{\PP \left[ X_0 + \dots + X_{k} < t \right]}
	{\PP  \left[ X_0 + \dots + X_{k-1} < t \right] -  \PP\left[ X_0 + \dots + X_{k} < t \right]}
	\\&=
	\frac
	{\PP [ X + Y < t]}
	{\PP [X < t] - \PP[X + Y < t] }
	\end{align*}
	for independent, nonnegative random variables $X := X_0 + \dots + X_{k-1}$ and $Y := X_k$.
	But since
	\begin{align*}
		\PP[X+Y \leq t] 
		= 
		\int_0^t \int_0^s \ud \rho_x \ud \rho_y \leq \int_0^t \mathbb{P}[X \leq s] \ud \rho_y \leq 					\mathbb{P}[X \leq t] \mathbb{P}[Y \leq t], 
\end{align*}
	we can further estimate
	\[
	\frac
	{\PP [ X + Y < t]}
	{\PP [X < t] - \PP[X + Y < t] }
	=
	\left(
		\frac{\PP [X < t]}{\PP [X + Y < t]} - 1
	\right)^{-1}
	\leq
	\left(
	\frac{1}{\PP [Y < t]} - 1
	\right)^{-1}
	\to 0,
	\]
	as $t \rightarrow 0^+$, since $Y = X_k = \eta_k - \eta_{k-1}$ and
    \[
    \lim_{t \to 0^+} \PP [Y < t] = \lim_{t \to 0^+} \PP [\eta_k - \eta_{k-1} < t] = 0, 
    \]	
    by \eqref{eq:limit-eta-n-in-difference}.
\end{proof}

We are now ready to prove~\autoref{thm:main-thm}~\eqref{item:fk-small-thm}.

\begin{proof}[Proof of~\autoref{thm:main-thm}~\eqref{item:fk-small-thm}]
Since $\Graph$ has rationally dependent edge lengths, we may assume -- after inserting artificial vertices of degree two -- that $\Graph$ is an equilateral metric graph constructed over a combinatorial graph $\mG$ with $N$ edges of length $\ell$, in particular, $N\ell = \vert \Graph \vert$. 
Moreover, since the heat content is decreasing with respect to the degree of $\mv_\mathrm{D}$ (cf.\ \cite{BifMug24}), we suppose without loss of generality 
\[
\deg_\mG(\mv_\mathrm{D}) = 1. 
\]
Let $\mathcal{P}_{\lvert \Graph \rvert}$ be the corresponding path graph of same total length and also constructed over a combinatorial path graph $\mP$ with $N$ edges having length $\ell$. 
As indicated above, we use the notation $\tau_{\mv_{\mathrm{D}}}^\mG$ for the first hitting time of the symmetric random walk in $\mG$, and use $\tau_{\mv_{\mathrm{D}}}^\mP$ to denote the corresponding hitting time of the $\{0\}$-vertex in the combinatorial path graph $\mP$ corresponding to $\mathcal{P}_{\lvert \Graph \rvert}$.
Now, if $\Graph$ is not a path graph, there must be a vertex with a degree of at least $3$.
Let $k_0$ be twice the minimal combinatorial distance of $\mv_{\mathrm{D}}$ to such a vertex of degree at least $3$, more precisely,
\[
\frac{k_0}{2} = \min_{\mw \in \mV(\mG), \: \deg_\mG(\mw) \geq 3} \mathrm{dist}_\mG(\mv_\mathrm{D},\mw).
\]
Since $\deg(\mv_{\mathrm{D}} ) = 1$, the unique path between $\mv_{\mathrm{D}}$ and this minimal vertex is isomorphic to a path. The leads us to (cf.\ also \autoref{fig:discrete-random-walk} below): 
\begin{align*}
\mathbb{P}_{\mv_\mathrm{D}}[\tau_{\mv_{\mathrm{D}}}^\mG = k] &= \mathbb{P}_{\mv_\mathrm{D}}[\tau_{\mv_{\mathrm{D}}}^\mP = k] \:\: \text{for all $k \leq k_0 - 1$; 	and} \:\:  \mathbb{P}_{\mv_\mathrm{D}}[\tau_{\mv_{\mathrm{D}}}^\mG = k_0] < \mathbb{P}_{\mv_\mathrm{D}}[\tau_{\mv_{\mathrm{D}}}^\mP = k_0].
\end{align*}

\begin{figure}[ht]
	\begin{tikzpicture}
	\begin{scope}
		\draw[thick] (0,0) -- (3,0);
		\draw[thick] (3,0) -- (3.7,.7);		
		\draw[thick] (3,0) -- (3.7,-.7);		
		
		\draw[thick, fill = white] (0,0) circle	(3pt);	
		\draw[thick, fill = black] (1,0) circle	(3pt);
		\draw[thick, fill = black] (2,0) circle	(3pt);
		\draw[thick, fill = black] (3,0) circle	(3pt);
		\draw[thick, fill = black] (3.7,.7) circle	(3pt);		
		\draw[thick, fill = black] (3.7,-.7) circle (3pt);	
		
		\draw (0,-.45) node {$\mv_\mathrm{D}$};
		\draw (3,-.45) node {$\frac{k_0}{2}$};		
		
		\draw[->] (3.1,.25) arc (180:115:.6);
		\draw (3.25,1.1) node {$\frac{1}{3}$};
		\draw[->] (3.25,0) arc (90:15:.6);
		\draw (4.1,-.25) node {$\frac{1}{3}$};		
		\draw[->] (2.9,.25) arc (45:135:.6);		
		\draw (2.1,.8) node {$\frac{1}{3}$};			
	\end{scope}
	
	\begin{scope}[xshift = 6cm]
		\draw[thick] (0,0) -- (5,0);
		
		\draw[thick, fill = white] (0,0) circle	(3pt);	
		\draw[thick, fill = black] (1,0) circle	(3pt);
		\draw[thick, fill = black] (2,0) circle	(3pt);
		\draw[thick, fill = black] (3,0) circle	(3pt);
		\draw[thick, fill = black] (4,0) circle	(3pt);		
		\draw[thick, fill = black] (5,0) circle (3pt);	
		
		\draw (0,-.45) node {$\mv_\mathrm{D}$};
		\draw (3,-.45) node {$\frac{k_0}{2}$};	
	
		\draw (2.5,.8) node {$\frac{1}{2}$};	
		\draw[->] (2.9,.25) arc (45:135:.6);				
		\draw (3.5,.8) node {$\frac{1}{2}$};			
		\draw[->] (3.1,.25) arc (135:45:.6);		
				
	\end{scope}

	\end{tikzpicture}
\caption{Illustration of the choice of $k_0$ in the proof of Theorem~\ref{thm:main-thm} in the case $k_0 = 6$.
We compare the path graph on the right and a non-path graph on the left and consider the unique realization of a random walk of minimal length, originating from $\mv_D$, going $k_0/2$ steps to the left and then returning to $\mv_D$ in $k_0/2$ steps.
On the graphs, all individual jumps have the same probability except for the $k_0/2$-th step. In it, the probability of jumping back is larger (namely $1/2$) in the path graph than it is in the other graph (in this case $1/3$).
}\label{fig:discrete-random-walk}	
\end{figure}
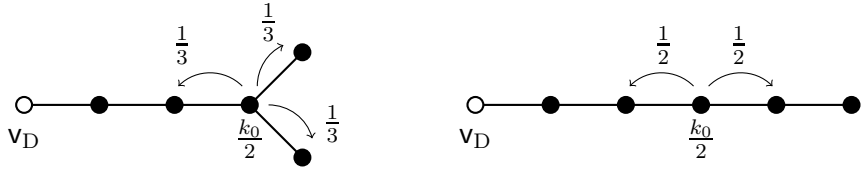

Now put
\[
C({\mP, \mG}) := \mathbb{P}_{\mv_\mathrm{D}}[\tau_{\mv_{\mathrm{D}}}^\mP = k_0] - \mathbb{P}_{\mv_\mathrm{D}}[\tau_{\mv_{\mathrm{D}}}^\mG = k_0] > 0.
\]
Subtracting \eqref{eq:probabilistic_expression_heat_content} for $\heatcont(\Graph; \mv_{\mathrm{D}} )$ from $\heatcont(\mathcal{P}; \{ 0 \})$, we obtain according to \autoref{thm:probabilistic-heat-content-formula} (note that the $\alpha_n(t)$ do \emph{not} depend on the topology of the underlying graph)
\begin{equation}\label{eq:comparison-formula}
	\begin{aligned}
		&\heatcont(\mathcal{P}_{\vert \Graph \vert}; \{ 0 \})
		-		
		\heatcont(\Graph; \mv_{\mathrm{D}} ) 	
		\\
		\qquad\quad
		&			
		= 
		\frac{1}{2}\sum_{n = 1}^\infty
		\left(
		\alpha_{n}(t) - \alpha_{n-1}(t)
		\right)
		\Big( 
			\mathbb{E}_{\mv_{\mathsf{D}}}\big[(\tau_{\mv_{\mathsf{D}}}^\mP - n) \mathbf{1}_{\{\tau_{\mv_{\mathsf{D}}}^\mP \geq n+1\}} \big] 
			-
			\mathbb{E}_{\mv_{\mathsf{D}}}\big[(\tau_{\mv_{\mathsf{D}}}^\mG - n) \mathbf{1}_{\{\tau_{\mv_{\mathsf{D}}}^\mG \geq n+1\}}  \big] 
		\Big)  
	\end{aligned}
\end{equation}
Using $\mathbb{E}_{\mv_\mathrm{D}}\big[\tau_{\mv_{\mathrm{D}}}^\mP \big] = \mathbb{E}_{\mv_\mathrm{D}}\big[\tau_{\mv_{\mathrm{D}}}^\mG \big] = 2 \# \mE$ due to \autoref{prop:expectation_random_walk}, and using $\mathbf{1}_{ \{\tau_{\mv_{\mathrm{D}}}^\mP \geq n + 1 \}} = 1 - \mathbf{1}_{ \{\tau_{\mv_{\mathrm{D}}}^\mP \leq n \}}$, we find for all $n \in \N$
	\begin{equation}
		\label{eq:first_differences_terms}
		\begin{aligned}
		&\mathbb{E}_{\mv_\mathrm{D}}\big[(\tau_{\mv_{\mathrm{D}}}^\mP - n) \mathbf{1}_{ \{\tau_{\mv_{\mathrm{D}}}^\mP \geq n + 1 \}} \big] - 
			\mathbb{E}_{\mv_\mathrm{D}}\big[(\tau_{\mv_{\mathrm{D}}}^\mG - n) \mathbf{1}_{\{ \tau_{\mv_{\mathrm{D}}}^\mG \geq n + 1\}} \big] 
		\\
		&= 
		\sum_{k = 0}^{n}
			(n - k)
			\left(
			\mathbb{P}_{\mv_{\mathrm{D}}}
				\big[\tau_{\mv_{\mathrm{D}}}^\mP = k \big]
				-
			\mathbb{P}_{\mv_{\mathrm{D}}}
				\big[\tau_{\mv_{\mathrm{D}}}^\mG = k \big] 
			\right).
		\end{aligned}
	\end{equation}
Due to the choice of $k_0$, this is zero for all $n \leq k_0$, whence, according to~\eqref{eq:first_differences_terms}
	\begin{align*}
	&\heatcont(\mathcal{P}_{\vert \Graph \vert}; \{ 0 \})
		-		
	\heatcont(\Graph; \mv_{\mathrm{D}} ) 	
	\\
	&=
	\frac{1}{2}
		\sum_{n = k_0 + 1}^\infty
		\left(
		\alpha_{n}(t) - \alpha_{n-1}(t)
		\right)
		\Big( 
			\mathbb{E}_{\mv_{\mathsf{D}}}\big[(\tau_{\mv_{\mathsf{D}}}^\mP - n) \mathbf{1}_{\{\tau_{\mv_{\mathsf{D}}}^\mP \geq n+1\}} \big] 
			-
			\mathbb{E}_{\mv_{\mathsf{D}}}\big[(\tau_{\mv_{\mathsf{D}}}^\mG - n) \mathbf{1}_{\{\tau_{\mv_{\mathsf{D}}}^\mG \geq n+1\}}  \big] 
		\Big) 
		\\
		&\geq
		\frac{1}{2}
		\left(
		\alpha_{k_0 + 1}(t) - \alpha_{k_0}(t)
		\right)
		\Big( 
			\mathbb{E}_{\mv_{\mathsf{D}}}\big[(\tau_{\mv_{\mathsf{D}}}^\mP - n) \mathbf{1}_{\{\tau_{\mv_{\mathsf{D}}}^\mP \geq k_0+2\}} \big] 
			-
			\mathbb{E}_{\mv_{\mathsf{D}}}\big[(\tau_{\mv_{\mathsf{D}}}^\mG - n) \mathbf{1}_{\{\tau_{\mv_{\mathsf{D}}}^\mG \geq k_0+2\}}  \big] 
		\Big) 
		\\
		&\qquad
		-
		N \sum_{n=k_0 + 2}^\infty \big( \alpha_n(t) - \alpha_{n-1}(t) \big) 
		.
	\end{align*}	
	Furthermore, again by~\eqref{eq:first_differences_terms}
\begin{align*}
				&\mathbb{E}_{\mv_\mathrm{D}}\big[(\tau_{\mv_{\mathrm{D}}}^\mP - (k_0 + 1)) \mathbf{1}_{ \{\tau_{\mv_{\mathrm{D}}}^\mP \geq k_0 + 2 \}} \big] - 
			\mathbb{E}_{\mv_\mathrm{D}}\big[(\tau_{\mv_{\mathrm{D}}}^\mG - (k_0 + 1)) \mathbf{1}_{\{ \tau_{\mv_{\mathrm{D}}}^\mG \geq k_0 + 2\}} \big] 
		\\
		&= 
		\sum_{k = 0}^{k_0 + 1}
			(k_0 + 1 - k)
			\left(
			\mathbb{P}_{\mv_{\mathrm{D}}}
				\big[\tau_{\mv_{\mathrm{D}}}^\mP = k \big]
				-
			\mathbb{P}_{\mv_{\mathrm{D}}}
				\big[\tau_{\mv_{\mathrm{D}}}^\mG = k \big] 
			\right)
		\\
		&\geq
		\mathbb{P}_{\mv_{\mathrm{D}}}
				\big[\tau_{\mv_{\mathrm{D}}}^\mP = k_0 \big]
				-
			\mathbb{P}_{\mv_{\mathrm{D}}}
				\big[\tau_{\mv_{\mathrm{D}}}^\mG = k_0 \big] 
		=
		C(\mP, \mG) > 0,
\end{align*}
and we infer
\begin{equation}
	\label{eq:lower_bound_difference_simplified}
	\heatcont(\mathcal{P}_{\vert \Graph \vert}; \{ 0 \})
		-		
	\heatcont(\Graph; \mv_{\mathrm{D}} ) 	
	\geq
	C(\mP, \mG)	
	(\alpha_{k_0 + 1}(t) - \alpha_{k_0}(t))
	+
	N
	(\ell - \alpha_{k_0 + 1}(t)).
\end{equation}
By \autoref{lem:alpha_n_2}, the right-hand side of \eqref{eq:lower_bound_difference_simplified} becomes strictly positive for sufficiently small $t$.

Conversely, if $\heatcont(\Graph;\mv_\mathrm{D}) = \heatcont(\mathcal{P}_{\vert \Graph \vert}; \{0\})$ for all sufficiently small $t>0$, the above argumentation prohibits the existence of a vertex $\mw \in \mV$ with $\deg_\mG(\mw) \geq 3$. 
Thus, $\deg_{\mG}(\mw) \in \{1,2\}$ for all $\mw \in \mV$ which implies that $\mG = \mP$ and thus, $\Graph = \mathcal{P}_{\vert \Graph \vert}$. This finishes the proof.
\end{proof}

\begin{rem}
Note that $t_0 > 0$ in \autoref{thm:main-thm} is implicitely given by the largest $t_0$ such that
\[
\frac{C(\mP, \mG)}{2N} > \frac{\ell-\alpha_{k_0 + 1}(t_0)}{\alpha_{k_0 + 1}(t_0)-\alpha_{k_0}(t_0)}.
\] 
\end{rem}

Note that the calculations made in \eqref{eq:comparison-formula} and \eqref{eq:first_differences_terms} yield the following comparison principle of two different heat contents.
\begin{corollary}[Comparison formula for the heat content]
Let $\Graph$ and $\mathcal{H}$ be two equilateral metric graphs with the same number of edges and with Dirichlet vertices $\mv_{\mathrm{D},\Graph}$ and $\mv_{\mathrm{D},\mathcal{H}}$ of the same degree $m_\mathrm{D} \in \mathbb{N}$. 
Then, for all $t > 0$,
\begin{align}\label{eq:deviation-formula}
\begin{aligned}
&\heatcont(\Graph;\mv_{\mathrm{D},\Graph}) - \heatcont(\mathcal{H};\mv_{\mathrm{D},\mathcal{H}}) \\& \qquad=\frac{m_\mathrm{D}}{2}
		\sum_{n = 1}^\infty
		\left(
		\alpha_{n}(t) - \alpha_{n-1}(t)
		\right) \sum_{k=0}^n (n-k)\Big( \mathbb{P}_{\mv_{\mathrm{D}, \Graph}}\big[\tau_{\mv_{\mathrm{D}, \Graph}}^\mG = k \big] - \mathbb{P}_{\mv_{\mathrm{D}, \mathcal{H}}}\big[\tau_{\mv_{\mathrm{D}, \mathcal{H}}}^\mathsf{H} = k \big] \Big).		
\end{aligned}
\end{align}
\end{corollary}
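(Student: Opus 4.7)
The plan is to apply \autoref{thm:probabilistic-heat-content-formula} separately to $\Graph$ and to $\mathcal{H}$ and subtract the two expansions. The crucial structural observation is that the coefficients $\alpha_n(t)$ defined in~\eqref{eq:defintition_alpha} depend only on the common edge length $\ell$ and on $n$, not on the topology of the underlying graph; they are therefore \emph{identical} in both expansions. Combined with the assumption $\deg(\mv_{\mathrm{D},\Graph}) = \deg(\mv_{\mathrm{D},\mathcal{H}}) = m_{\mathrm{D}}$, the difference takes the form
\[
\heatcont(\Graph;\mv_{\mathrm{D},\Graph}) - \heatcont(\mathcal{H};\mv_{\mathrm{D},\mathcal{H}}) = \frac{m_{\mathrm{D}}}{2}\left[\alpha_0(t)\bigl(\E[\tau_{\mv_{\mathrm{D},\Graph}}^\mG] - \E[\tau_{\mv_{\mathrm{D},\mathcal{H}}}^\mH]\bigr) + \sum_{n=1}^\infty (\alpha_n(t) - \alpha_{n-1}(t)) D_n\right],
\]
where $D_n := \E_{\mv_{\mathrm{D},\Graph}}[(\tau_{\mv_{\mathrm{D},\Graph}}^\mG - n)\mathbf{1}_{\{\tau_{\mv_{\mathrm{D},\Graph}}^\mG \geq n+1\}}] - \E_{\mv_{\mathrm{D},\mathcal{H}}}[(\tau_{\mv_{\mathrm{D},\mathcal{H}}}^\mH - n)\mathbf{1}_{\{\tau_{\mv_{\mathrm{D},\mathcal{H}}}^\mH \geq n+1\}}]$.

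Next, I invoke \autoref{prop:expectation_random_walk}: since $\Graph$ and $\mathcal{H}$ share the same number $\#\mE$ of edges, and the Dirichlet vertices share the same degree $m_\mathrm{D}$, the expected first return times coincide, namely $\E[\tau_{\mv_{\mathrm{D},\Graph}}^\mG] = \E[\tau_{\mv_{\mathrm{D},\mathcal{H}}}^\mH] = 2\#\mE/m_\mathrm{D}$. Consequently, the $\alpha_0(t)$-term drops out entirely. Then I reproduce the computation already carried out in~\eqref{eq:first_differences_terms}: writing $\mathbf{1}_{\{\tau \geq n+1\}} = 1 - \mathbf{1}_{\{\tau \leq n\}}$ and exploiting once more the equality of the total expectations, the telescoping
\[
D_n = \sum_{k=0}^n (n-k)\bigl(\PP_{\mv_{\mathrm{D},\Graph}}[\tau_{\mv_{\mathrm{D},\Graph}}^\mG = k] - \PP_{\mv_{\mathrm{D},\mathcal{H}}}[\tau_{\mv_{\mathrm{D},\mathcal{H}}}^\mH = k]\bigr)
\]
emerges exactly as in the proof of \autoref{thm:main-thm}~\eqref{item:fk-small-thm}. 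Substituting this expression for $D_n$ yields the claimed formula~\eqref{eq:deviation-formula}.

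Since the whole argument is a straightforward assembly of \autoref{thm:probabilistic-heat-content-formula}, \autoref{prop:expectation_random_walk}, and the telescoping identity~\eqref{eq:first_differences_terms} previously established, there is no genuine obstacle. The one sensitivity worth flagging is the necessity of the hypothesis that $\Graph$ and $\mathcal{H}$ have the same number of edges: without it, the $\alpha_0(t)$-term would not cancel, and the clean representation involving only the \emph{differences} $\alpha_n(t) - \alpha_{n-1}(t)$ would be lost. The matching-degree assumption on $\mv_{\mathrm{D},\Graph}$ and $\mv_{\mathrm{D},\mathcal{H}}$ plays a similar role in ensuring the common prefactor $m_\mathrm{D}/2$.
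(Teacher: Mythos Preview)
Your proposal is correct and follows exactly the approach the paper intends: the paper does not give a separate proof for this corollary but simply states that it follows from the calculations in~\eqref{eq:comparison-formula} and~\eqref{eq:first_differences_terms}, which is precisely what you have spelled out. One implicit assumption worth making explicit is that the two equilateral graphs share the same edge length $\ell$ (so that the $\alpha_n(t)$ genuinely coincide); this is tacit in the paper as well, since the corollary is stated immediately after comparing $\Graph$ with $\mathcal{P}_{|\Graph|}$ under identical edge length.
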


\section{Mercer's theorem for the heat content and Faber--Krahn inequality for large times}
\label{sec:proof_large_times}

In this section we prove~\autoref{thm:main-thm}~\eqref{item:fk-large-thm}, that is the Faber--Krahn inequality for the heat content at large times. In contrast to the probabilistic methods of the previous section, we rely on spectral theoretic approach using Mercer's theorem. Denoting the eigenvalues (counted with multiplicities) by
\begin{align}\label{eq:eigenvalues}
0 < \lambda_1(\Graph;\mv_\mathrm{D}) < \lambda_2(\Graph;\mv_\mathrm{D}) \leq \lambda_3(\Graph;\mv_\mathrm{D}) \leq \dots \rightarrow +\infty,
\end{align}
and a corresponding orthonormal basis of eigenfunctions of $-\Delta_{\Graph}^{\mv_{\mathrm{D}}}$ by and $(\phi_k)_{k \in \N} \subset L^2(\Graph)$, respectively, spectral calculus immediately implies
\begin{equation}
	\label{eq:Mercer}
	\heatcont(\Graph; \mv_\mathrm{D})
	=
	\sum_{k=1}^\infty
	\mathrm{e}^{- t \lambda_k(\Graph;\mv_\mathrm{D})}
	\big\vert \left\langle \phi_k, \mathbf{1} \right\rangle_{L^2(\Graph)} \big\vert^2 \quad \text{for all $t>0$.}
\end{equation}
This expression is due to Mercer, see also~\cite{PaulsenR-16} and references therein for a more comprehensive overview.
It can be used to immediately infer property \eqref{item:fk-large-thm} of~\autoref{thm:main-thm}:
\begin{proof}[Proof of Theorem~\ref{thm:main-thm}.\eqref{item:fk-large-thm}]
	It is known~\cite{Nic87, Fri05} that among all connected graphs of total length $\lvert \Graph \rvert$ with at least one Dirichlet vertex the lowest eigenvalue $\lambda_k(\Graph;\mv_\mathrm{D})$ is minimized by the lowest eigenvalue 
	\begin{align*}
	\lambda_1(\mathcal{P}_{\lvert \Graph \rvert};\{ 0 \})
	=
	\frac{\pi^2}{4\lvert \Graph \rvert^2}
	\end{align*}
	of the path graph $\mathcal{P}_{\lvert \Graph \rvert}$, that is the interval with a Dirichlet condition on one side and a Neumann condition on the other side with equality if and only if $\Graph = \mathcal{P}_{\lvert \Graph \rvert}$.
	Therefore, if $\Graph \neq \mathcal{P}_{\lvert \Graph \rvert}$, it follows that 
	\begin{align}\label{eq:lowest-eigenvalue-optimal}
	\lambda_1(\Graph;\mv_\mathrm{D})
	>
	\lambda_1(\mathcal{P}_{\lvert \Graph \rvert};\{0\}).
	\end{align}
	Denoting the first normalized eigenfunction of $\Delta_{\mathcal{P}_{\lvert \Graph \rvert}}^{\{0\}}$ by $\psi$, we can estimate 
	\begin{align}\label{eq:mercer-estimate}
	\begin{aligned}
	&\heatcont(\mathcal{P}_{\lvert \Graph \rvert}; \{0\})
	-
	\heatcont(\Graph; \mv_{\mathrm{D}})
	\\& \qquad\quad\geq
	\mathrm{e}^{- t \lambda_1(\mathcal{P}_{\lvert \Graph \rvert};\{0\})}
	\left(
		\underbrace{\lvert \left\langle \psi, \mathbf{1} \right\rangle \rvert^2}_{> 0}
		-
		\sum_{k = 1}^\infty 
		\mathrm{e}^{- t \left(\lambda_k(\Graph;\mv_\mathrm{D}) - \lambda_1(\mathcal{P}_{\lvert \Graph \rvert};\{0\}) \right)}
		\lvert \left\langle \phi_k, \mathbf{1} \right\rangle \rvert^2
	\right)
	\\
	& \qquad\quad\geq
	\mathrm{e}^{- t \lambda_1(\mathcal{P}_{\lvert \Graph \rvert};\{0\})}
	\left(
		\lvert \left\langle \psi, \mathbf{1} \right\rangle \rvert^2
		-
		\mathrm{e}^{- t \left(\lambda_1(\Graph;\mv_\mathrm{D}) - \lambda_1(\mathcal{P}_{\lvert \Graph \rvert};\{0\}) \right)}
		\underbrace{\sum_{k = 1}^\infty
		\lvert \left\langle \phi_k, \mathbf{1} \right\rangle \rvert^2}_{= \lvert \Graph \rvert}
	\right).
	\end{aligned}
	\end{align}
	But since 
	\[
	\mathrm{e}^{- t \left(\lambda_1(\Graph;\mv_\mathrm{D}) - \lambda_1(\mathcal{P}_{\lvert \Graph \rvert};\{0\}) \right)} < \frac{\lvert \left\langle \psi, \mathbf{1} \right\rangle \rvert^2}{\lvert \Graph \rvert}
	\]
	for sufficiently large $t$, the difference will be positive for such $t$.
	
	Moreover, if $\heatcont(\Graph;\mv_\mathrm{D}) = \heatcont(\mathcal{P}_{\vert \Graph \vert};\{0\})$ for all $t \geq T_0$, then $\Graph \neq \mathcal{P}_{\vert \Graph \vert}$ would imply according to \eqref{eq:lowest-eigenvalue-optimal} and \eqref{eq:mercer-estimate} that $\heatcont(\Graph;\mv_\mathrm{D}) < \heatcont(\mathcal{P}_{\vert \Graph \vert};\{0\})$ for $t>T_0$ large enough, thus leading to equality if and only if $\Graph = \mathcal{P}_{\vert \Graph \vert}$. This completes the proof.
	\end{proof}

\begin{rem}
	Even though expression \eqref{eq:Mercer} might look innocent the terms $\left\langle \phi_k, \mathbf{1} \right\rangle$ are hard to control and bound from below.
	Yet they become relevant, in particular at small times, justifying the use of different methods in the small and large time regime.
	 
\end{rem}

\end{document}